\newcommand{\bal}{\@ifstar{\@bals}{\@bal}}
\def\@bals#1\eal{\begin{align*}#1\end{align*}}
\def\@bal#1\eal{\begin{align}#1\end{align}}
\let\today\relax
\def\ps@pprintTitle{%
    \let\@oddhead\@empty
    \let\@evenhead\@empty
    \def\@oddfoot{\footnotesize\itshape
      \hfill\today}  
    \let\@evenfoot\@oddfoot
    }
\newtheorem{theorem}{Theorem}[section]
\newtheorem{remark}{Remark}[section]
\let\today\relax
\def\ps@pprintTitle{%
    \let\@oddhead\@empty
    \let\@evenhead\@empty
    \def\@oddfoot{\footnotesize\itshape
         {Preprint to be published in Journal of Nonlinear Science} \hfill\today}%
    \let\@evenfoot\@oddfoot
    }
\def\ee{\end{eqnarray*}}
\def\be{\begin{eqnarray*}}
\def\bee{\end{eqnarray}}
\def\bbe{\begin{eqnarray}}
\def\ea{\end{align*}}
\def\ba{\begin{align*}}
\def\baa{\end{align}}
\def\bba{\begin{align}}
\def\Rh{\mathrm{Rh}}
    \def\p{\partial}
    \def\v{\bm v}
    \def\f{\bm f}
\begin{document}

\title{Steady-state bifurcation of  a non-parallel flow involving energy dissipation over a Hartmann boundary layer  }

\author{Zhi-Min Chen}
 \ead{zmchen@szu.edu.cn}

\address{School  of Mathematics and Statistics, Shenzhen University, Shenzhen 518060,  China}%


\begin{abstract}
A plane non-parallel vortex flow in a square fluid domain  is examined. The energy dissipation of the flow is dominated  by viscosity and  linear friction effect of a Hartmann layer. This is a traditional Navier-Stokes flow when the linear  friction effect  is not involved, whereas it is a magnetohydrodynamic flow when the energy dissipation is fundamentally dominated by the friction. It is proved that  linear  critical values of a   spectral problem are
 nonlinear thresholds  leading to the onset of secondary  steady-state flows, the nonlinear phenomenon observed in laboratory experiments.

 \begin{keyword}Non-parallel flows, Navier-Stokes equations, bifurcation, vortex flows,  linear friction effect of  Hartmann layer
 \\
 \

 {\it 2020 MSC:} 35B32, 35Q30,76D05,76E09,76E25
 \end{keyword}
\end{abstract}

\maketitle


\section{Introduction}
To study the inverse energy cascade towards large scales \cite{K} of plane flows, Sommeria and Verron \cite{Sommeria86,Sommeria,Sommeria87}  presented magnetohydrodynamic experiments by using electronically driven flows in a closed box, containing a thin horizontal layer of liquid metal. The box is bottomed with electromagnets producing a uniform vertical magnetic field. The flow velocity is small so that the upper free surface is negligible. The three-dimensional motion  reduces a two-dimensional one as the vertical movement in the thin horizontal layer fluid    can be ignored. The energy dissipation of the fluid motion counts for viscosity and the Hartmann layer friction applied on the bottom of liquid metal.

The non-dimensional governing   equations of the  two-dimensional approximation motion for the velocity $\v$ and pressure $p$ in the domain $(0,1)\times (0,1)$ are  \cite{Sommeria86,Sommeria,Sommeria87}
\bal
\frac{\p \v}{\p t} + \v\cdot \nabla \v + \nabla p - \frac1{Re}\Delta \v +\frac{\v}{Rh} = \f,\,\,\, \nabla \cdot \v=0.\label{aa1}
\eal
Here $Re$ is the Reynolds number, $Rh$ is the Rayleigh number measuring the Hartmann bottom  friction and   $\f$ is the Lorentz  driving force defined by electric currents so that
\be \nabla\times \f= \frac{\pi^2}2 \sin(2\pi x)\sin(2\pi y)\,\,\, \mbox{ and } \,\,\,\int^1_0\int^1_0 |\nabla\times \f|dxdy =2.\ee
The stream function $\psi$ and the vorticity of the fluid motion are defined as
\be \Big(\frac{ \partial \psi}{\partial y},\,-\frac{\partial \psi}{\partial x}\Big)= \v,\,\,\,\omega =\nabla\times \v= -\Delta \psi.\ee
The   vorticity  formulation of (\ref{aa1}) is
\bbe
-\frac{\p\Delta \psi}{\p t} + J(\psi,\Delta \psi)-\frac{\Delta \psi}{Rh}  + \frac{\Delta^2 \psi}{Re}=\frac{\pi^2}2 \sin(2\pi x)\sin(2\pi y),
\label{new1}
\bee
where the nonlinear convective term is written as the Jacobian
\be J(\psi,\Delta\psi)=\p_x\psi \,\p_y\Delta\psi -\p_y\psi\p_x\Delta\psi.
\ee

The basic flow of (\ref{new1}) is dependant  on the parameters $Re$ and $Rh$. It is convenient  to use the modified system \cite{Thess} of (\ref{new1})  expressed through
\bbe
-\frac{ \p\Delta \psi}{\p t}+J(\psi,\Delta\psi)+(-\mu\Delta +\nu\Delta^2)(\psi-\sin x\sin y)=0.\label{NS1}
\bee
In reference to  \cite{Sommeria86,Sommeria,Sommeria87,Thess}, the stream function is assumed to satisfy   the free slip boundary condition
\bbe\label{bdc} \psi|_{\p\Omega}=\Delta\psi|_{\p\Omega}=0
\bee
for  the modified fluid domain $\Omega = (0,2\pi )\times (0,2\pi ),$ and  is demonstrated  in  the Fourier expansion
\bbe \psi= \sum_{n,m\ge 1} a_{n,m} \sin \frac{nx}{2}\sin \frac{my}2.\bee
The parameters $\nu$ and $\mu$ are defined by the  transformations \cite{Chen2019}
\bbe  \label{ReRh}  
Rh= \frac{2\sqrt{\mu+2\nu}}{\mu \pi}\,\,\mbox{ and }\,\, Re=\frac{8\pi\sqrt{\mu+2\nu}}{\nu}.
\bee
 With this modification, we have the basic steady-state flow
\be \psi_0 = \sin x\sin y. \ee

The basic flow exhibits four vortices in Figure \ref{ff1}. The experiments \cite{Sommeria86,Sommeria,Sommeria87} show transitions of the basic flow in a scenario of  inverse energy cascade towards to large scales. The principal transition amongst them is the steady-state bifurcation of $\psi_0$ into a secondary flow,  which is  sketched in Figure \ref{ff1} (b).

 To the understanding of the transition, Thess \cite{Thess} demonstrated  critical stability parameters $(\nu_c,\mu_c)$ of a spectral problem linearized from (\ref{NS1})-(\ref{bdc}) so that linear stable and unstable domains are defined. The author \cite{Chen2019} provided   nonlinear stability  analysis of a vortex flow and employed a  numerical spectral scheme to study all possible  linear spectral solutions  together with secondary flows as a result of nonlinear saturation of primary linear instability.    The vortex instability   with respect to   two vortex merging phenomena was also discussed by Meunier  {\it et al.} \cite{2005merging}
and Cerretelli and  Williamson\cite{2003merging}.
The experimental studies \cite{Sommeria86,Sommeria} of the non-parallel flow $\sin x\sin y$ are  developed from the magnetohydrodynamic experiment of Bondarenko {\it et al.} \cite{Bon1979} on the steady-state bifurcation of the parallel Kolmogorov flow $\sin x$. The existence of  secondary steady-state flows and secondary temporal  periodic flows bifurcating from the Kolmogorov flow   has been studied extensively \cite{Chen2002,Chen2005,Yud}.

However, the basic flow $\psi_0$ is non-parallel and  rigorous instability analysis for the  secondary flow existence  is missing. In the study of the linear spectral problem, Thess \cite{Thess} emphasized that
 linear stability theory
is not able to predict the structure of flows above the instability
threshold, but it is a matter of bifurcation theory to decide whether stationary secondary solutions exist at
all.
He also suggested the linear spectral study to  be continued in the following   two directions:
(i) the formation of a secondary flow as a result of
nonlinear saturation of the primary instability and (ii) linear stability analysis of the secondary flow (see  Orszag and Patera \cite{Or} on this stability problem  for  a parallel  flow).
 In the present study, we   solve  problem (i)      by showing the formation of a secondary steady-state flow  resulting from nonlinear saturation of the linear instability.
For problem (ii), no existing rigorous analysis is available.  The linear stability analysis of    secondary flow   \cite{Or} on  the parallel Poiseuille flow problem is not applicable to (ii).  To sketch   stability of the secondary steady-state  flow, we use numerical computation via  a finite difference scheme. Selected numerical results  show  that  the secondary  flow, close to its threshold and   observed in the laboratory experiments  \cite{Sommeria86,Sommeria,Sommeria87}, can be obtained  by taking an initial state in its vicinity.  The stability of the secondary flow and the  positive viscosity $\nu>0$ ensure the convergence of the numerical  scheme so that the secondary  flow attracts the non-stationary flow  starting from the initial state.
\begin{figure}
 \centering
\includegraphics[height=.45\textwidth, width=.45\textwidth]{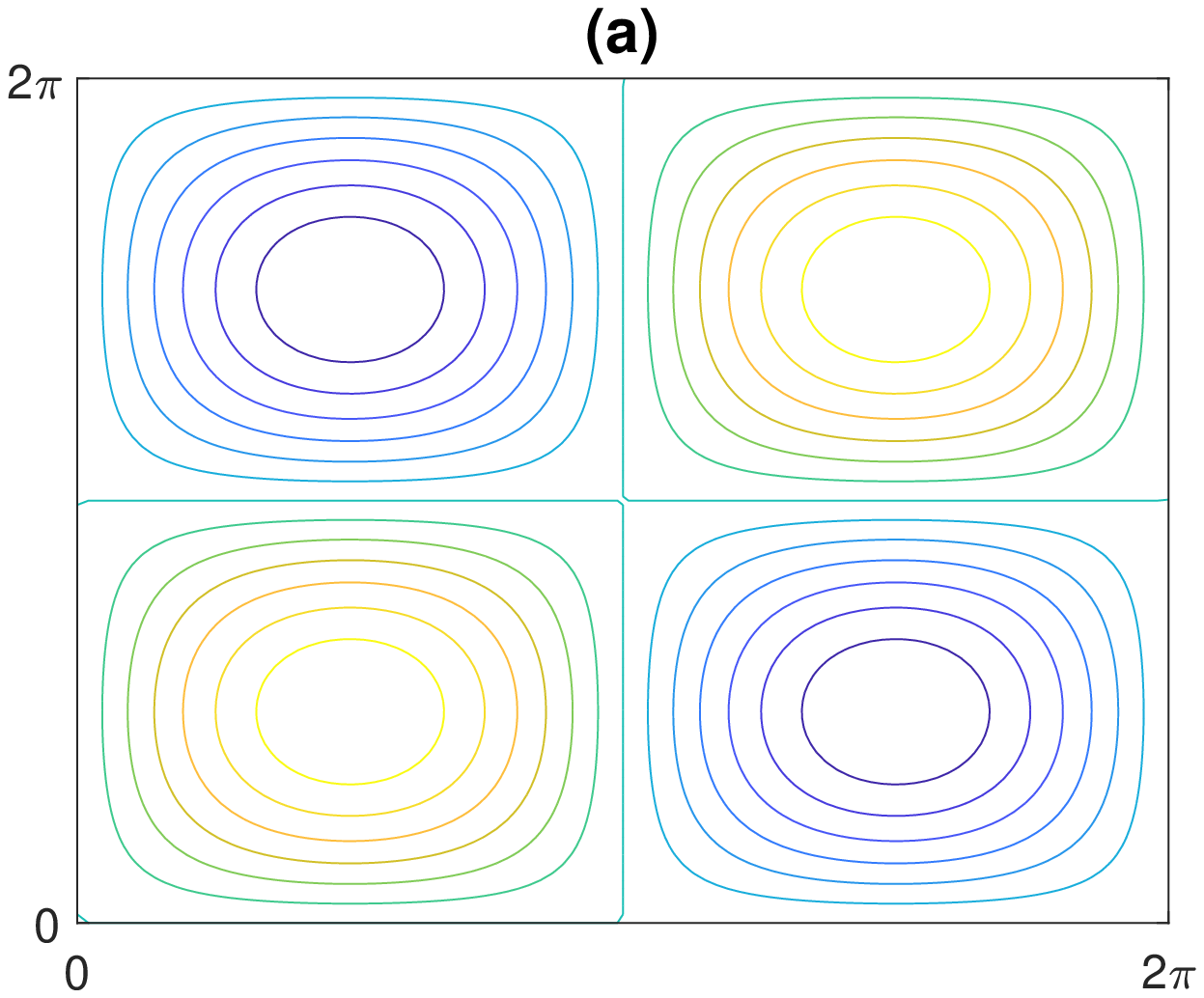}
\includegraphics[height=.45\textwidth, width=.45\textwidth]{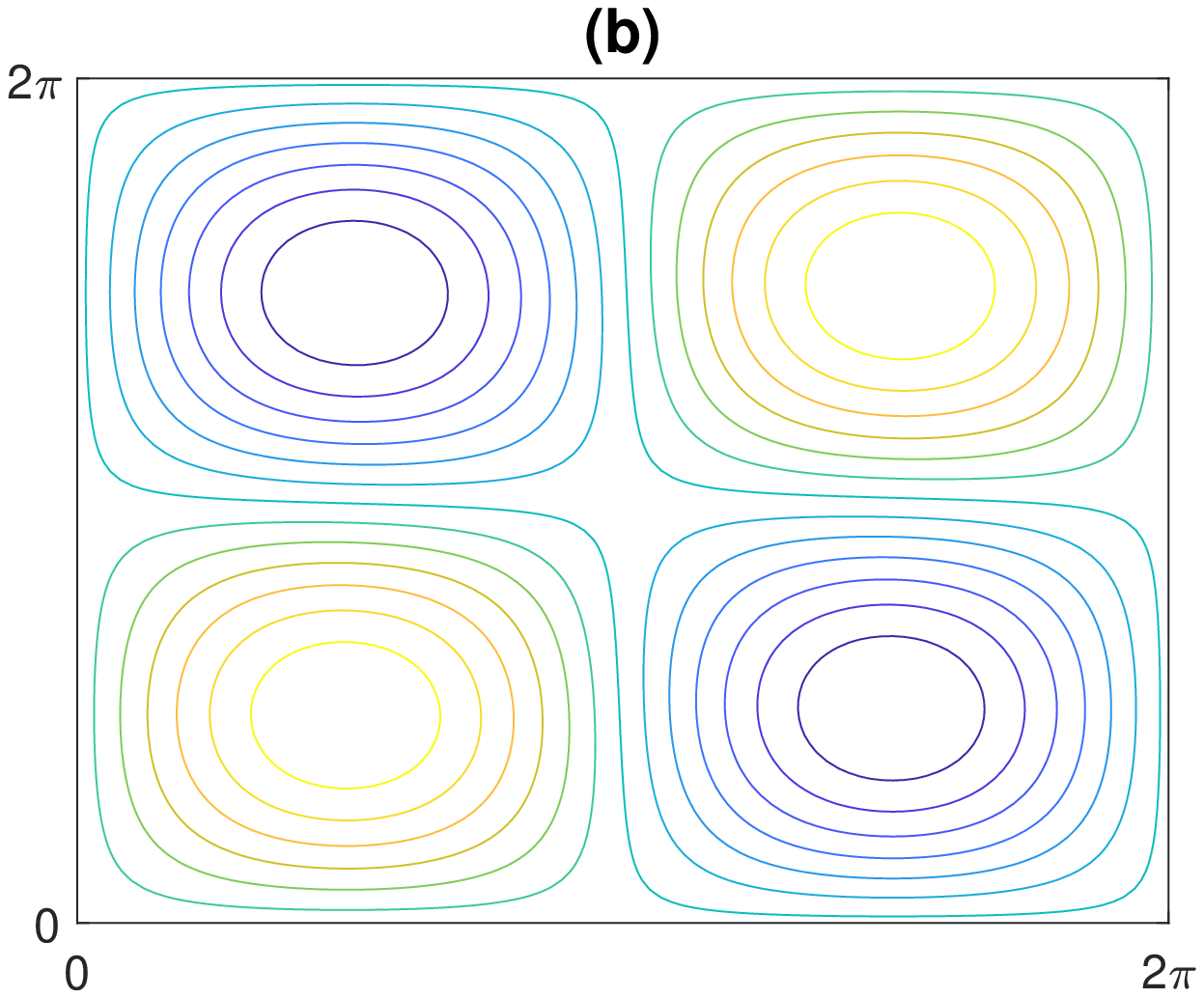}

 \caption{(a) The basic steady-state flow $\psi_0$;  (b) the contour lines of the function $\psi_0-0.1 \sin \frac x2\sin \frac y2$, representing a profile of the secondary flow in the magnetohydrodynamic  experiment \cite{Sommeria}.}
  \label{ff1}
 \end{figure}

It is the principal  purpose of present paper to show the existence of the secondary flows, which are to be contained in the Hilbert space
\be  H^4 &=& \Bigg\{\psi =\sum_{n,m\ge 1} a_{n,m}\sin \frac{nx}2\sin\frac{my}2 \Big |\,\,
\\
&&\|\psi\|_{H^4} = \Bigg(\sum_{n,m\ge 1} \Big(1+(\frac{n^2+m^2}4)^2\Big)^2 |a_{n,m}|^2\Bigg)^\frac12<\infty  \Bigg\}.
\ee
The secondary flows will be constructed by nonlinear perturbation of the basic flow $\psi_0$ by using the eigenfunctions of the spectral problem
\bbe \lambda \Delta\psi=-\mu  \Delta \psi+ \nu\Delta^2  \psi+ J(\psi_0,(2+\Delta)\psi),\,\,\, 0\ne\psi= \sum_{n,m\ge 1} a_{n,m}\sin \frac {nx}2\sin \frac{my}2,\label{LL}
\bee
which is linearized from   (\ref{NS1}). The eigenfunctions will be studied in the following three linear orthogonal subspaces of $H^4$:
\bbe \label{o}E_1&=&\Bigg\{ \psi\in H^4 | \,\,\, \psi = \sum_{n,m\ge 1; \, n,m \mbox{ \footnotesize odd}}a_{n,m}\sin \frac {nx}2\sin\frac {my}2\Bigg\},
\\
\label{o1} E_2&=&\Bigg\{ \psi\in H^4 | \,\,\, \psi = \sum_{n,m\ge 1; \, n \mbox{ \footnotesize odd};\, m \mbox{ \footnotesize even}}a_{n,m}\sin \frac {nx}2\sin\frac {my}2\Bigg\},
\\
\label{o2}E_3&=&\Bigg\{ \psi\in H^4 | \,\,\, \psi = \sum_{n,m\ge 1; \, n\mbox{ \footnotesize even};\, m \mbox{ \footnotesize odd}}a_{n,m}\sin \frac {nx}2\sin\frac {my}2\Bigg\}.
\bee
The algebraic form of  spectral equation (\ref{LL}) displayed in next section shows that  the mode
$\sin \frac{nx}2\sin \frac{my}2$ is influenced by the four modes $\sin \frac{(n\pm 2)x}2 \sin \frac{(m\pm2)y}2$ in $x$ and $y$ directions.  The introduction of these orthogonal subspaces   implies  the validity of  the invariance property of (\ref{LL})   in the following sense:
\be  \sin \frac{(n\pm 2)x}2 \sin \frac{(m\pm2)y}2\in E_i\,\,\mbox{ whenever }\,\, \sin \frac {nx}2\sin\frac {my}2 \in E_i \,\,\,\,\, i=1, 2, 3.
\ee

The main result of the present paper  reads as:
\begin{theorem}\label{main} 
(i).
 Let $\nu>0$, $\mu\ge 0$ and $\lambda +\mu +\frac12 \nu >0$. Then  spectral problem (\ref{LL}) has at most three linear independent eigenfunctions. These eigenfunctions are contained in the set
$E_1\cup E_2\cup E_3$.

 (ii) Assume that  spectral problem (\ref{LL}) admits a critical solution $(\lambda,\psi, \nu,\mu)=(0,\psi_c,\nu_c,\mu_c)$ for $\nu_c>0$,  $\mu_c\ge 0$ and    $\psi_c\in E_{i }$ for an integer $1\le i \le 3$.
 Then   there exist a function
$\psi_{i }\in H^4$ and a real $\delta$ so that  system (\ref{NS1})-(\ref{bdc}) has a steady-state solution $(\psi,\nu,\mu)$ branching  off the bifurcation point $(\psi_0,\nu_c,\mu_c)$ in the direction of
$\psi_c$:
\bbe &&\psi=\psi_0+\epsilon \psi_{c} +\epsilon^2 \psi_{i },\,\,\,\nu=\nu_c+\epsilon\delta \nu_c,\,\,\mu=\mu_c +\epsilon\delta \mu_c,\label{dd1}
\label{dd1x}
\bee
provided that $\epsilon>0$ is sufficiently small. Here $\sigma$ and $\psi_i$ are uniformity bounded  functions of $\epsilon$ for small $\epsilon$, and $\psi_i$ is in the orthogonal  complement of the eigenfunction space $\mbox{\rm span}\{\psi_c\}$ or  $\psi_i\in H^4/ \mbox{\rm span}\{\psi_c\}$.

\end{theorem}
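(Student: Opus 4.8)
The plan is to treat the two parts separately. For part (i), the claim that (\ref{LL}) has at most three linearly independent eigenfunctions, all lying in $E_1\cup E_2\cup E_3$, I would argue as follows. First I would write out the algebraic (Galerkin) form of the spectral equation alluded to in the text: projecting onto the mode $\sin\frac{nx}2\sin\frac{my}2$ couples it only to the four modes $\sin\frac{(n\pm2)x}2\sin\frac{(m\pm2)y}2$. Hence the index lattice $\{(n,m):n,m\ge1\}$ splits, under the equivalence $(n,m)\sim(n\pm2,m\pm2)$, into classes determined by the parities of $n$ and $m$; the three classes (odd,odd), (odd,even), (even,odd) are exactly $E_1,E_2,E_3$, and the fourth class (even,even) — corresponding to $\psi_0$ itself — does not support an eigenfunction perturbation in the relevant sense. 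The spectral operator is block-diagonal with respect to $E_1\oplus E_2\oplus E_3$, so it suffices to show each block has at most one eigenfunction. For this I would test equation (\ref{LL}) against $\psi$ (or against $\Delta\psi$), integrate by parts using the free-slip boundary conditions (\ref{bdc}), and use the skew-symmetry of $J(\psi_0,\cdot)$ in a suitable weighted inner product together with the hypothesis $\lambda+\mu+\tfrac12\nu>0$ to force a sign-definiteness (a Rayleigh-quotient / energy argument) that rules out a second independent eigenfunction within a block. The condition $\lambda+\mu+\tfrac12\nu>0$ is precisely what makes the quadratic form coming from $-\mu\Delta+\nu\Delta^2+J(\psi_0,(2+\Delta)\cdot)$ coercive on each block, since the smallest eigenvalue of $-\Delta$ on $\Omega$ is $\tfrac12$ and the $(2+\Delta)$ factor then has a controllable sign. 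I expect this coercivity bookkeeping — tracking how the $2+\Delta$ shift interacts with the eigenvalues $\tfrac{n^2+m^2}4$ within each parity class — to be the fussiest part of (i).

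For part (ii), the bifurcation statement, I would set up a Lyapunov–Schmidt reduction. Write the steady-state version of (\ref{NS1}) as $F(\psi,\nu,\mu)=0$ where $F:H^4\times\mathbb R^2\to H^0$ (or the appropriate target space), with $F(\psi_0,\nu,\mu)=0$ for all $(\nu,\mu)$ by construction of the modified system. The linearization at the bifurcation point $(\psi_0,\nu_c,\mu_c)$ is exactly the operator $L$ in (\ref{LL}) with $\lambda=0$, whose kernel, by hypothesis, is the one-dimensional space $\mathrm{span}\{\psi_c\}$ inside some $E_i$. I would verify that $L$ is Fredholm of index zero on the chosen spaces (the biharmonic term $\nu_c\Delta^2$ dominates, the lower-order and $J(\psi_0,\cdot)$ terms are relatively compact), so the cokernel is also one-dimensional; let $\psi_c^*$ span it. Project $F=0$ onto $\mathrm{range}(L)$ and onto its complement. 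The range component, by the implicit function theorem applied in $\psi_i\in H^4/\mathrm{span}\{\psi_c\}$, is solved to give $\psi_i=\psi_i(\epsilon,\delta)$ with the expansion $\psi=\psi_0+\epsilon\psi_c+\epsilon^2\psi_i$ and $\nu=\nu_c+\epsilon\delta\nu_c$, $\mu=\mu_c+\epsilon\delta\mu_c$; here the two-parameter perturbation $(\nu,\mu)$ tied by the single scalar $\delta$ is what gives us the freedom to satisfy the scalar bifurcation equation. The remaining scalar (bifurcation) equation $g(\epsilon,\delta)=\langle F(\psi_0+\epsilon\psi_c+\epsilon^2\psi_i,\nu,\mu),\psi_c^*\rangle=0$ must be solved for $\delta$ as a function of $\epsilon$.

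The crux is the solvability of that scalar bifurcation equation. After dividing by $\epsilon$ (the $O(1)$ term vanishes since $\psi_c\in\ker L$), the leading term is linear in $\delta$ with coefficient $\langle (\partial_\nu L\,\nu_c+\partial_\mu L\,\mu_c)\psi_c,\psi_c^*\rangle = \langle(-\mu_c\Delta+2\nu_c\Delta^2)\psi_c,\psi_c^*\rangle$ evaluated with the derivative of (\ref{ReRh}) built in; a nonzero value of this transversality coefficient lets the implicit function theorem finish the job. I would show it is nonzero by a direct computation on the block $E_i$, using that $\psi_c^*$ is proportional to $\Delta\psi_c$ (or to $(2+\Delta)\psi_c$, whichever makes $L$ formally self-adjoint in the right pairing), so the coefficient reduces to a manifestly positive combination of $\mu_c\sum(n^2+m^2)|a_{n,m}|^2$-type sums. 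Hence the nondegeneracy is genuine and $\delta$ is determined; this is the step I expect to require the most care, since it is where the specific coupling $Rh,Re\leftrightarrow\mu,\nu$ in (\ref{ReRh}) and the skew term $J(\psi_0,(2+\Delta)\cdot)$ must conspire to give a nonzero transversality. The asserted uniform boundedness of $\psi_i$ and $\delta$ in $\epsilon$ then follows from the implicit function theorem's local Lipschitz estimates, and $\psi_i\in H^4/\mathrm{span}\{\psi_c\}$ by the choice of complement in the splitting.
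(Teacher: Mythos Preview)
Your architecture---parity block-diagonality for (i), Lyapunov--Schmidt for (ii)---matches the paper's, but two of the computational steps you rely on are wrong. For (i), neither $\psi$ nor $\Delta\psi$ is the right multiplier: the skew identity you need is $(J(\psi_0,g),g)=0$, so one must pair (\ref{LL}) with $-(2+\Delta)\psi$. This yields
\[
0=\sum_{n,m\ge1}\beta_{n,m}(\lambda+\mu+\nu\beta_{n,m})(\beta_{n,m}-2)\,|a_{n,m}|^2,\qquad \beta_{n,m}=\tfrac14(n^2+m^2);
\]
under the hypothesis every factor $\lambda+\mu+\nu\beta_{n,m}$ is positive, so the only negative contributions come from the three low modes $a_{1,1},a_{1,2},a_{2,1}$, forcing one of them to be nonzero and placing the eigenfunction in the corresponding $E_i$. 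Simplicity within a block then follows by subtraction: two independent eigenfunctions in $E_1$ would produce a combination with $a_{1,1}=0$, which the identity forces to vanish.

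The more serious gap is in (ii). Your guess that $\psi_c^*$ is proportional to $\Delta\psi_c$ or $(2+\Delta)\psi_c$ is incorrect, and with it the claim that the transversality coefficient is a ``manifestly positive'' sum of squares. Writing out the algebraic form of the adjoint problem and comparing with (\ref{LL}) gives instead $a^*_{n,m}=(\beta_{n,m}-2)\,a_{m,n}$---note the index swap $m\leftrightarrow n$. The transversality pairing is therefore
\[
\big((-\mu_c\Delta+\nu_c\Delta^2)\psi_c,\psi_c^*\big)=\sum_{n,m}\beta_{n,m}(\mu_c+\nu_c\beta_{n,m})(\beta_{n,m}-2)\,a_{n,m}a_{m,n},
\]
a sum of cross terms with no evident sign. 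The paper shows it is nonzero by applying $2a_{n,m}a_{m,n}\le a_{n,m}^2+a_{m,n}^2$ term by term and comparing with the energy identity above; the needed \emph{strict} inequality comes from $a_{1,3}\ne a_{3,1}$, which is read off the $(1,1)$ component of the algebraic recursion since $a_{1,1}\ne0$. This is the genuinely delicate step and your plan does not reach it. A smaller omission: the spaces $E_i$ are invariant for the linearization but not for the quadratic term $J(\psi,\Delta\psi)$ (odd times odd produces even), so the reduction must be carried out in the enlarged space $H^4_i$ obtained by adjoining the even--even modes to $E_i$; this space is nonlinearly invariant and keeps the kernel one-dimensional regardless of what happens in the other $E_j$.
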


\begin{remark}
This theorem shows the secondary flow bifurcating in the direction of $\psi_c$. If $\psi_c$ is replaced by the eigenfunction $-\psi_c$, we have another secondary flow bifurcating in the direction of $-\psi_c$.
\end{remark}

This paper is structured  as follows.   The spectral analysis for the theoretical base of Theorem \ref{main}  is established in Section 2, which contains the proof of Theorem \ref{main} (i). The second assertion  of this theorem  is  proven in Section 3  by developing a bifurcation technique of Rabinowitz \cite{Rab} on a B\'enard problem.  Theorem \ref{main} requires the existence of a linear critical spectral solution $(\nu_c,\mu_c,\psi_c)$.  Section 4 contains a discussion of the existence  and  connection  of Theorem \ref{main} with  Crandall- Rabinowitz bifurcation theorem.
To enrich the theoretical result, we display numerical spectral solutions and use a finite difference scheme to locate a secondary flow in accordance with the experimental observation of \cite{Sommeria,Sommeria87} in Section 5. This numerical study aids  the stability detection of the secondary flow.     Moreover, in addition to  the initial stage of    inverse energy cascade    profiled in Figure \ref{ff1}, a larger scale topological transition   via three vortices into two are presented in Section 5.

\section{Linear spectral analysis}
We begin  with the spectral  assertion of  Theorem \ref{main}.

\subsection{Proof of Theorem \ref{main} (i)}
\begin{proof} Let $(\cdot,\cdot)$ denote the inner product of real $L_2$ as
\be (\varphi,\phi) = \frac1{\pi^2}\int^{2\pi}_0\int^{2\pi}_0 \varphi \phi dx dy.
\ee
 Taking the $L_2$ inner product of the spectral equation (\ref{LL}) with  $-(\Delta +2)\psi$ and employing  integration by parts, we have
\bbe
0 &=& (-\lambda \Delta -\mu\Delta \psi+\nu \Delta^2\psi+J(\psi_0,(\Delta+2)\psi),(-\Delta-2)\psi)\nonumber
\\
&=&(-\lambda \Delta  -\mu\Delta \psi+\nu \Delta^2\psi,(-\Delta-2)\psi).\label{aa20}
\bee
This together with (\ref{LL}) becomes
\bbe
\label{a00}0=\sum_{ n,m\ge 1} \beta_{n,m}(\lambda +\mu +\nu\beta_{n,m})(\beta_{n,m}-2)|a_{n,m}|^2\,\,\,\mbox{ for }\,\,\, \beta_{n,m} = \frac14(n^2+m^2), 
\bee
or
\bbe
\lefteqn{\sum_{ n,m\ge 1;\,\beta_{n,m}>2} \beta_{n,m}(\lambda+\mu +\nu\beta_{n,m})(\beta_{n,m}-2)|a_{n,m}|^2}\nonumber
\\
&=&
 \beta_{1,1}(\lambda+\mu +\nu\beta_{1,1})(2-\beta_{1,1})|a_{1,1}|^2+\beta_{1,2}(\lambda+\mu +\nu\beta_{1,2})(2-\beta_{1,2})|a_{1,2}|^2\nonumber
 \\
 &&+\beta_{2,1}(\lambda+\mu +\nu\beta_{2,1})(2-\beta_{2,1})|a_{2,1}|^2.\label{eigg}
\bee
By the observation  $2>\beta_{1,2}=\beta_{2,1} > \beta_{1,1}=\frac12$ and the condition $\lambda +\mu +\frac12 \nu >0$, we see that
 \be \lambda +\mu+\nu \beta_{n,m} \ge  \lambda +\mu+\frac12\nu>0 \mbox{ for }\,\,\, n,\,m \ge 1\ee
 and  the both sides of (\ref{eigg}) are non-negative. Hence  the right-hand side of (\ref{eigg}) is positive due to the non-zero eigenfunction property $\psi\ne 0$.  Therefore we may firstly assume the  term involving $a_{1,1}$ on the right-hand side  of (\ref{eigg}) being    positive  or $a_{1,1}\ne 0$,

On the other hand, let $a_{n,m}=0$ whenever $n\le 0$ or $m\le 0$. Spectral problem (\ref{LL}) is  formulated  as \cite{Chen2019}
\bbe\label{mmm1}
\lefteqn{\sum_{n,m\ge 1}\beta_{n,m} (\lambda+\mu +\nu\beta_{n,m})  a_{n,m} \sin \frac{nx}{2} \sin \frac{my}{2}}
\\ \nonumber
&=& -\sum_{n,m\ge-2}^\infty\Big\{\frac{n-m}{8}[(\beta_{n-2,m-2}-2)a_{n-2,m-2}
-(\beta_{n+2,m+2}-2)a_{n+2,m+2}]\nonumber
\\
&& +\frac{n+m}{8}[(\beta_{n-2,m+2}-2)a_{n-2,m+2}
- (\beta_{n+2,m-2}-2)a_{n+2,m-2} ]\Big\}\sin \frac{nx}{2}\sin \frac{my}{2}.\hspace{3mm}\nonumber
\bee
This implies that the non-zero coefficient $a_{1,1}$ produces   the coefficients $a_{n,m}$ for odd integers $n,m\ge 1$. That is,   the eigenfunction $\psi \in E_1$  is generated by the mode $\sin \frac x2\sin \frac y2$.
 Moreover, the derivation of (\ref{eigg}) implies that
\bbe
\lefteqn{\beta_{1,1}(\lambda+\mu +\nu\beta_{1,1})(2-\beta_{1,1})|a_{1,1}|^2}\nonumber
\\ &=&\sum_{n,m\geq 1; \, n,m \mbox{ \footnotesize odd};\,\beta_{n,m}>2} \beta_{n,m}(\lambda+\mu +\nu\beta_{n,m})(\beta_{n,m}-2)|a_{n,m}|^2.\label{aaa2}
\bee

Similarly,    we may suppose $\psi \in E_2$ when $a_{1,2}\ne 0$ and $\psi \in E_3$ when $a_{2,1}\ne 0$.
 Additionally,  the corresponding  coefficients are subject to the equations
\bbe
 \lefteqn{\beta_{1,2}(\lambda+\mu +\nu\beta_{1,2})(2-\beta_{1,2})|a_{1,2}|^2}\nonumber
\\ &=&\sum_{n,m\ge 1; \, n \mbox{ \footnotesize odd}; \, m \mbox{ \footnotesize even}; \,\beta_{n,m}>2} \beta_{n,m}(\lambda+\mu +\nu\beta_{n,m})(\beta_{n,m}-2)|a_{n,m}|^2\,\label{aaa22}
\bee
for $a_{1,2}\ne 0$, and
\bbe \lefteqn{\beta_{2,1}(\lambda+\mu +\nu\beta_{2,1})(2-\beta_{2,1})|a_{2,1}|^2}\nonumber
\\ &=&\sum_{ n,m\ge 1; \, n \mbox{ \footnotesize even}; \, m \mbox{ \footnotesize odd};\,\beta_{n,m}>2} \beta_{n,m}(\lambda+\mu +\nu\beta_{n,m})(\beta_{n,m}-2)|a_{n,m}|^2\label{aaa222}
\bee
for $a_{2,1}\ne 0$.
The  proof of Assertion (i) is complete.
\end{proof}

\subsection{Spectral simplicity property}
To construct the secondary flows, we have to use the eigenfunction simplicity  property shown in the following result.

\begin{theorem} \label{th2}   Let  $\nu>0$, $\mu\ge 0$ and $\lambda+\mu +\frac12 \nu >0$.   Then we have  eigenfunction space dimension estimate:
\bbe \label{one}\dim \Big\{ \psi \in E_i  | \,\,\, \lambda \Delta \psi=-\mu\Delta \psi+\nu \Delta^2\psi+J(\psi_0,(\Delta+2)\psi)\Big\}\leq 1, \,\,\,i=1,2,3.
\bee
Moreover, if $(\lambda,\psi,\nu,\mu)$ is a spectral solution of (\ref{LL}) for $\psi \in E_1\cup E_2\cup E_3$, then we have
\bbe( (-\lambda\Delta-\mu\Delta +\nu \Delta^2)\psi, \psi^*)\ne 0.\label{two}
\bee
Here $\psi^*$ is the conjugate eigenfunction of $\psi$ subject to the conjugate equation of (\ref{LL}):
\bbe \lambda \Delta \psi^*&=&-\mu  \Delta \psi^*+ \nu\Delta^2  \psi^*+ (-\Delta-2)J(\psi_0,\psi^*),\label{mmb}
\\
  \psi^*&=&\sum_{n,m\ge 1}^\infty a^*_{n,m}\sin\frac{nx}{2}\sin \frac{my}{2}\ne 0,\nonumber
\bee
produced by employing the  $L_2$  pairing $(\cdot,\cdot)$.
\end{theorem}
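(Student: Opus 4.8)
I would obtain (\ref{one}) directly from the ``seed'' mechanism used in the proof of Theorem~\ref{main}(i). Under the hypothesis $\lambda+\mu+\frac12\nu>0$ one has $\lambda+\mu+\nu\beta_{n,m}>0$ for all $n,m\ge1$, so every term on the right of (\ref{aaa2}) is nonnegative; hence a nonzero eigenfunction $\psi\in E_1$ must have $a_{1,1}\neq0$, and likewise $a_{1,2}\neq0$ for $\psi\in E_2$ and $a_{2,1}\neq0$ for $\psi\in E_3$ by (\ref{aaa22})--(\ref{aaa222}). If $\psi,\tilde\psi\in E_i$ were two eigenfunctions sharing one value of $\lambda$, the combination obtained by clearing their seed coefficients would again lie in $E_i$, satisfy (\ref{LL}), and have vanishing seed coefficient, so the relevant identity among (\ref{aaa2})--(\ref{aaa222}) would force it to vanish; thus $\psi,\tilde\psi$ are proportional and $\dim\le1$.

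For (\ref{two}) the key is to pin down $\psi^*$ explicitly. Let $\mathcal R$ denote the interchange $(x,y)\mapsto(y,x)$ if $i=1$, the half-period shift $y\mapsto y+\pi$ if $i=2$, and $x\mapsto x+\pi$ if $i=3$. In each case $\mathcal R$ is an involutive $L_2$-isometry preserving $E_i$ that commutes with $\Delta$, hence with $-\mu\Delta+\nu\Delta^2$ and with $2+\Delta$; moreover $\mathcal R\psi_0=\psi_0$ when $i=1$ and $\mathcal R\psi_0=-\psi_0$ when $i=2,3$, and the orientation of $\mathcal R$ (reversing in the first case, preserving in the others) combines with the parity of $\psi_0$ to yield $\mathcal R\,J(\psi_0,\cdot)=-J(\psi_0,\cdot)\,\mathcal R$ in all three cases. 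Substituting these relations into (\ref{LL}) shows in one line that $(2+\Delta)\mathcal R\psi$ solves the conjugate equation (\ref{mmb}). Since $n^2+m^2=8$ has no solution with the parity defining $E_i$, the multiplier $2+\Delta$ is invertible on $E_i$, so $(2+\Delta)\mathcal R\psi$ is a nonzero element of $E_i$; and because $\psi\mapsto(2+\Delta)\mathcal R\psi$ is a linear automorphism of $E_i$ carrying solutions of (\ref{LL}) bijectively onto solutions of (\ref{mmb}), the latter form at most a line in $E_i$ by (\ref{one}). Hence $\psi^*=\kappa\,(2+\Delta)\mathcal R\psi$ for some real $\kappa\neq0$; in particular $\psi^*\in E_i$.

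Inserting this into the left side of (\ref{two}) and expanding in the orthonormal system $\{\sin\frac{nx}2\sin\frac{my}2\}$, the pairing becomes $\kappa\,S$, where $S=\sum_{n,m\ \mathrm{odd}}\beta_{n,m}(\lambda+\mu+\nu\beta_{n,m})(2-\beta_{n,m})\,a_{n,m}a_{m,n}$ when $i=1$, while $S=\sum_{n,m}\varepsilon_{n,m}\,\beta_{n,m}(\lambda+\mu+\nu\beta_{n,m})(2-\beta_{n,m})\,|a_{n,m}|^2$ with $\varepsilon_{n,m}=(-1)^{m/2}$, resp.\ $(-1)^{n/2}$, when $i=2$, resp.\ $i=3$. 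Each $E_i$ contains a \emph{unique} mode with $\beta_{n,m}<2$ (namely $(1,1)$, $(1,2)$, $(2,1)$) and none with $\beta_{n,m}=2$; putting $w_{n,m}=\beta_{n,m}(\lambda+\mu+\nu\beta_{n,m})|\beta_{n,m}-2|>0$, the identities (\ref{aaa2})--(\ref{aaa222}) read exactly $w_{\mathrm{low}}|a_{\mathrm{low}}|^2=\sum_{(n,m)\neq\mathrm{low}}w_{n,m}|a_{n,m}|^2$. Feeding this into the triangle inequality — together with $|a_{n,m}a_{m,n}|\le\frac12(|a_{n,m}|^2+|a_{m,n}|^2)$ and the $\mathcal R$-invariance of the index set and of $w_{n,m}$ when $i=1$ — shows that the part of $S$ from modes other than the lowest cannot exceed the lowest term in modulus, so $S$ has the sign of its lowest term and $|S|>0$, \emph{unless} equality holds throughout, which forces $\mathcal R\psi=\pm\psi$. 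But $\mathcal R\psi=\pm\psi$, combined with $\mathcal R\bigl(-\mu\Delta+\nu\Delta^2+J(\psi_0,(2+\Delta)\cdot)\bigr)\mathcal R=-\mu\Delta+\nu\Delta^2-J(\psi_0,(2+\Delta)\cdot)$ and (\ref{LL}), forces $J(\psi_0,(2+\Delta)\psi)=0$, hence $(-\mu\Delta+\nu\Delta^2)\psi=\lambda\Delta\psi$, i.e.\ $\beta_{n,m}(\lambda+\mu+\nu\beta_{n,m})a_{n,m}=0$ for every $n,m$; since $\lambda+\mu+\nu\beta_{n,m}>0$, this gives $\psi=0$, a contradiction. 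Therefore $S\neq0$, which proves (\ref{two}).

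I expect the crux to be the choice of the symmetry $\mathcal R$ on each invariant subspace — in particular that $E_1$ requires the $x\leftrightarrow y$ interchange whereas $E_2,E_3$ require half-period shifts — and the verification of $\mathcal R\,J(\psi_0,\cdot)=-J(\psi_0,\cdot)\,\mathcal R$ leading to $\psi^*=\kappa(2+\Delta)\mathcal R\psi$; once this is available, (\ref{two}) is a sharpened, ``off-diagonal'' form of the energy identity (\ref{a00}) already exploited for Theorem~\ref{main}(i).
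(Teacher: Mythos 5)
Your proof of the dimension bound (\ref{one}) is essentially the paper's own argument: the seed coefficient is nonzero by (\ref{aaa2})--(\ref{aaa222}), a linear combination kills the seed, and the same identity then annihilates the difference. For (\ref{two}), however, you take a genuinely different route that arrives at the same object. The paper identifies $\psi^*$ by manipulating the adjoint algebraic system: for $E_1$ it derives the transpose formula $a^*_{n,m}=(\beta_{m,n}-2)a_{m,n}$ and obtains strictness from the lowest-mode relation ($a_{1,1}$ proportional to $a_{1,3}-a_{3,1}$, hence $a_{1,3}\neq a_{3,1}$) plus the Cauchy inequality, while for $E_2,E_3$ it detours through a companion eigenfunction $\tilde\psi$ in the other subspace, proves $a_{n,m}\not\equiv a_{m,n}$, and normalizes $a_{1,2}=a_{2,1}$. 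You instead construct $\psi^*$ structurally as $\kappa(2+\Delta)\mathcal{R}\psi$ from a symmetry $\mathcal{R}$ that anticommutes with $J(\psi_0,\cdot)$; in $E_1$ your $\mathcal{R}$ is exactly the index swap, so you recover the paper's formula, and in $E_2,E_3$ you express $\psi^*$ through $\psi$'s own coefficients with signs $(-1)^{m/2}$ (resp.\ $(-1)^{n/2}$), bypassing the companion-eigenfunction step; your strictness mechanism is also different and uniform in $i$: equality in the Cauchy/triangle estimate forces $\mathcal{R}\psi=\pm\psi$, the anticommutation then kills $J(\psi_0,(2+\Delta)\psi)$, and $\lambda+\mu+\tfrac12\nu>0$ kills $\psi$, a contradiction — this replaces the paper's case-specific computations and is arguably cleaner. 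Two minor caveats, neither fatal: like the paper, you tacitly take $\psi^*$ within the same parity class as $\psi$ (the pairing is trivially zero against other classes, so this is the only meaningful reading of the statement), and $(2+\Delta)\mathcal{R}$ is not literally an automorphism of $E_i\subset H^4$ since it loses two derivatives; what your argument actually needs — injectivity of $2+\Delta$ on the $E_i$ modes because $n^2+m^2=8$ has no admissible solution, and the induced one-to-one correspondence between solutions of (\ref{LL}) and of (\ref{mmb}) in that class so that (\ref{one}) pins $\psi^*$ down up to a factor — is correct as you state it.
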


\begin{proof}

To  show  the validity of  (\ref{one}),   we suppose that there is  a spectral solution $(\lambda,\psi,\nu,\mu)$ with the eigenfunction $\psi\in E_1$. To the contrary, if (\ref{one}) with $i=1$ is not true,  there exists an additional spectral solution $(\lambda,\hat\psi,\nu,\mu)$ with  the eigenfunction $\hat \psi \in E_1$  linearly independent of $\psi$ and involving  expansion coefficients $\hat a_{n,m}$.
 It follows from (\ref{aaa2}) that  $\hat a_{1,1}\ne 0$. Therefore, we have the additional spectral solution $(\lambda, \psi-\frac{ a_{1,1}}{\hat a_{1,1} }\hat \psi,\nu,\mu)$.
Using the eigenfunction $\psi-\frac{ a_{1,1}}{\hat a_{1,1} }\hat \psi$ instead of $\psi$ in  (\ref{aaa2}), we have
 \bbe
0=\sum_{ n,m \mbox{ \footnotesize odd};\,\beta_{n,m}>2} \beta_{n,m}(\lambda+\mu+\nu\beta_{n,m})(\beta_{n,m}-2)|a_{n,m}-\frac{a_{1,1}}{\hat a_{1,1}}\hat a_{n,m}|^2,\label{a11}
\bee
which together with the condition $\lambda +\mu +\nu \frac12>0$  gives
\be a_{n,m}=\frac{a_{1,1}}{\hat a_{1,1}}\hat a_{n,m}\,\,\,\mbox{ or }\,\,\,  \psi= \frac{a_{1,1}}{\hat a_{1,1}}\hat\psi_{1,1}.
\ee
Hence $\hat \psi$ and $\psi$ are linearly dependent. This leads to a contraction and thus (\ref{one}) holds true for $i=1$.

 Arguing in the same way, we obtain (\ref{one}) for $i=2$ and $3$.

To verify (\ref{two}), we first assume the eigenfunction $\psi\in E_1$. Consider the  conjugate spectral problem (\ref{mmb}), which can be formulated  in the algebraic equation
\bal\nonumber
0=&\sum_{n,m\ge -2; \, n,m \mbox{ \footnotesize odd}}\Bigg\{ \beta_{n,m} (\lambda+\mu+\nu\beta_{n,m})  a^*_{n,m}
\\ \nonumber
&-(\beta_{n,m}-2)\bigg\{\frac{n-m}{8}(a^*_{n-2,m-2}-a^*_{n+2,m+2})\nonumber
+\frac{n+m}{8}(a^*_{n-2,m+2} - a^*_{n+2,m-2} )\bigg\}\Bigg\}\sin \frac{nx}{2}\sin \frac{my}{2}.
\eal
Here  $a^*_{n,m}=0$ whenever  $n \le 0$ or $m\le 0$. The previous equation is rewritten as
\bbe
0&=&\sum_{n,m\ge -2; \, n,m \mbox{ \footnotesize odd} }\Bigg\{ [(\lambda+\mu)\beta_{n,m} +\nu\beta_{n,m}^2]  \frac{a^*_{n,m}}{\beta_{n,m}-2}  \label{mmm22x}
\\ \nonumber
&&-\frac{n\!-\!m}{8}(a^*_{n-2,m-2}\!-\! a^*_{n+2,m+2}) 
 +\frac{n\!+\!m}{8}(a^*_{n-2,m+2} \!-\! a^*_{n+2,m-2} )\Bigg\}\sin \frac{nx}{2}\sin \frac{my}{2}.
\bee
Moreover, for  $a'_{m,n}=\frac{a^*_{n,m}}{\beta_{n,m}-2}$, equation (\ref{mmm22x}) becomes
\be\nonumber
0&=&\sum_{n,m\ge -2; \, n, m \mbox{ \footnotesize odd} }\Bigg\{ [(\lambda+\mu)\beta_{m,n} +\nu\beta_{m,n}^2]  a'_{m,n} 
\\ \nonumber
&& +\frac{n-m}{8}[(\beta_{m-2,n-2}-2)a'_{m-2,n-2}
-(\beta_{m+2,n+2}-2)a'_{m+2,n+2}]\nonumber
\\
&& +\frac{n+m}{8}[(\beta_{m+2,n-2}-2)a'_{m+2,n-2}
- (\beta_{m-2,n+2}-2)a'_{m-2,n+2} ]\Bigg\}\sin \frac{nx}{2}\sin \frac{my}{2}.
\ee
Replacing the indices   $n, \,m$ by $\tilde m,\, \tilde n $ respectively, and then omitting the index superscript tildes,   we have
\bbe\nonumber
0&=&\sum_{n,m\ge -2; \, n,m \mbox{ \footnotesize odd} }\Bigg\{ [(\lambda+\mu)\beta_{n,m} +\nu\beta_{n,m}^2]  a'_{n,m} 
\\ \nonumber
&& +\frac{n-m}{8}[(\beta_{n-2,m-2}-2)a'_{n-2,m-2}
-(\beta_{n+2,m+2}-2)a'_{n+2,m+2}]\nonumber
\\
&& +\frac{n+m}{8}[(\beta_{n-2,m+2}-2)a'_{n-2,m+2}
- (\beta_{n+2,m-2}-2)a'_{n+2,m-2} ]\Bigg\}\sin \frac{mx}{2}\sin \frac{ny}{2}.\hspace{3mm}\label{mmm111x}
\bee
The algebraic equation system  for the coefficients of (\ref{mmm111x})  is  identical to that for the coefficients   of (\ref{mmm1}), when the supercritical primes are omitted. Thus by (\ref{one}),  we have the relationship between the expansion coefficients of the eigenfunction $\psi$ and those of its conjugate counterpart $\psi^*$:
\be a^*_{n,m} =(\beta_{m,n}-2)a_{m,n}.\ee
Hence, we have
\bbe(\Delta (-\lambda -\mu +\nu \Delta)\psi, \psi^*)= \sum_{n,m\ge 1; \, n,m \mbox{ \footnotesize odd}} \beta_{n,m}(\lambda+\mu+\nu\beta_{n,m})(\beta_{n,m}-2) a_{n,m} a_{m,n}.\label{aaa1}
\bee

  Note that $\psi\ne 0$ implies  $a_{1,1}\ne 0$ due to (\ref{aaa2}). Moreover, from the algebraic equation defined by the first component of  (\ref{mmm1}) with respect to the mode $\sin \frac x2\sin\frac y2$, it follows that the coefficient $a_{1,1}$ is  proportional to $a_{1,3}-a_{3,1}$. Hence $a_{3,1}\ne a_{1,3}$ due to $a_{1,1}\ne 0$. Therefore  the  Cauchy inequality
\bbe 2a_{1,3}a_{3,1} < a_{1,3}^2+a_{3,1}^2\label{y2}\bee
holds true.
It follows  from  (\ref{aaa1}), (\ref{y2}) and the Cauchy inequality $2a_{n,m}a_{m,n} \le a_{n,m}^2+ a_{m,n}^2$  that
\be (\Delta (-\lambda -\mu +\nu \Delta)\psi, \psi^*)
&=&-\beta_{1,1}(\lambda+\mu+\nu\beta_{1,1})(2-\beta_{1,1})a_{1,1}^2
\\&& +\!\!\!\!\!\!\!\!\!\!\!\!\sum_{n, m\ge 1; \, n,m \mbox{ \footnotesize odd};\,\beta_{n,m}>2} \!\!\!\!\!\!\!\!\!\!\!\!\beta_{n,m}(\lambda+\mu+\nu\beta_{n,m})(\beta_{n,m}-2) a_{n,m} a_{m,n}
\\
&<&-\beta_{1,1}(\lambda+\mu+\nu\beta_{1,1})(2-\beta_{1,1})a_{1,1}^2
\\&&+\sum_{n, m\ge 1; \, n,m \mbox{ \footnotesize odd};\,\beta_{n,m}>2} \!\!\!\!\!\!\!\!\!\!\!\!\beta_{n,m}(\lambda+\mu+\nu\beta_{n,m})(\beta_{n,m}-2) a_{n,m}^2,
\ee
which equals zero due to (\ref{aaa2}). This gives the validity of (\ref{two}) when $\psi\in E_1$.

Moreover, if (\ref{LL}) has a  spectral solution $(\lambda,\psi,\nu,\mu)$ with the eigenfunction
\bbe \psi=\sum_{n,m\ge 1;\,n \mbox{ \footnotesize odd}; \, m \mbox{ \footnotesize even}}a_{n,m}\sin\frac{nx}2\sin\frac{my}2\in E_2,\label{E2}
\bee
   the algebraic equation (\ref{mmm1})  for the coefficients $a_{n,m}$ of  $\psi$ reduces to
\bbe
\lefteqn{0=  \sum_{n,m\ge -2;\,n \mbox{ \footnotesize odd}; \, m \mbox{ \footnotesize even}}\Bigg\{\beta_{n,m}(\lambda+\mu+\nu \beta_{n,m})a_{n,m}}\nonumber
\\
&&+\frac{n-m}{8}[(\beta_{n-2,m-2}-2)a_{n-2,m-2}-(\beta_{n+2,m+2}-2)a_{n+2,m+2}] \nonumber
\\
&&+\frac{n+m}{8}[(\beta_{n-2,m+2}-2)a_{n-2,m+2}-(\beta_{n+2,m-2}-2)a_{n+2,m-2} ]\Bigg\}\sin \frac{nx}{2}\sin \frac{my}{2}.\hspace{6mm}\label{mmm10}
\bee
 Therefore, the  conjugate spectral problem (\ref{mmb}) has a solution $(\lambda,\psi^*,\nu,\mu)$, subject to the algebraic equation
\bbe
0&=&\sum_{n,m\ge -2; \, n \mbox{ \footnotesize odd}; \, m \mbox{ \footnotesize even} }\Bigg\{ [(\lambda+\mu)\beta_{n,m} +\nu\beta_{n,m}^2]  \frac{a^*_{n,m}}{\beta_{n,m}-2}  \label{mmm22}
\\ \nonumber
&&-\frac{n\!-\!m}{8}(a^*_{n-2,m-2}\!-\! a^*_{n+2,m+2}) 
 +\frac{n\!+\!m}{8}(a^*_{n-2,m+2} \!-\! a^*_{n+2,m-2} )\Bigg\}\sin \frac{nx}{2}\sin \frac{my}{2}.
\bee
Here the assumption $a^*_{n,m}=0$ whenever  $n \le 0$ or $m\le 0$ is used.
For  $a'_{m,n}=\frac{a^*_{n,m}}{\beta_{n,m}-2}$, equation (\ref{mmm22}) becomes
\begin{align}\nonumber
0&=\sum_{n,m\ge -2; \, n \mbox{ \footnotesize even}; \, m \mbox{ \footnotesize odd} }\Bigg\{ [(\lambda+\mu)\beta_{n,m} +\nu\beta_{n,m}^2]  a'_{n,m} 
\\ \nonumber
& +\frac{n-m}{8}[(\beta_{n-2,m-2}-2)a'_{n-2,m-2}
-(\beta_{n+2,m+2}-2)a'_{n+2,m+2}]\nonumber
\\
& +\frac{n+m}{8}[(\beta_{n-2,m+2}-2)a'_{n-2,m+2}
- (\beta_{n+2,m-2}-2)a'_{n+2,m-2} ]\Bigg\}\sin \frac{mx}{2}\sin \frac{ny}{2}.\label{mmm111}
\end{align}
This together with (\ref{one}) implies that spectral problem (\ref{LL}) has  a spectral solution $(\lambda,\tilde  \psi,\nu,\mu)$ with
\bbe \label{hat}\tilde  \psi=\sum_{n,m\ge 1; \, n \mbox{ \footnotesize even}; \, m \mbox{ \footnotesize odd} }a_{n,m} \sin \frac{nx}2\sin\frac{my}2\in E_3,
\bee
subject to the equation
\begin{align}\nonumber
0&=\sum_{n,m\ge -2; \, n \mbox{ \footnotesize even}; \, m \mbox{ \footnotesize odd} }\Bigg\{ [(\lambda+\mu)\beta_{n,m} +\nu\beta_{n,m}^2]  a_{n,m} 
\\ \nonumber
& +\frac{n-m}{8}[(\beta_{n-2,m-2}-2)a_{n-2,m-2}
-(\beta_{n+2,m+2}-2)a_{n+2,m+2}]\nonumber
\\
& +\frac{n+m}{8}[(\beta_{n-2,m+2}-2)a_{n-2,m+2}
- (\beta_{n+2,m-2}-2)a_{n+2,m-2} ]\Bigg\}\sin \frac{mx}{2}\sin \frac{ny}{2}.\label{xxx1}
\end{align}
Therefore, we have
\bbe a^*_{n,m} = (\beta_{n,m}-2)a_{m,n}.\label{xxx3}\bee

We show that $a_{n,m}\not\equiv a_{m,n}$. Otherwise, if $a_{n,m}\equiv a_{m,n}$, equation (\ref{xxx1}) becomes
\begin{align}\nonumber
0&=\sum_{n,m\ge -2; \, n \mbox{ \footnotesize odd}; \, m \mbox{ \footnotesize even} }\Bigg\{ [(\lambda+\mu)\beta_{n,m} +\nu\beta_{n,m}^2]  a_{n,m} 
\\ \nonumber
& -\frac{n-m}{8}[(\beta_{n-2,m-2}-2)a_{n-2,m-2}
-(\beta_{n+2,m+2}-2)a_{n+2,m+2}]\nonumber
\\
& -\frac{n+m}{8}[(\beta_{n-2,m+2}-2)a_{n-2,m+2}
- (\beta_{n+2,m-2}-2)a_{n+2,m-2} ]\Bigg\}\sin \frac{mx}{2}\sin \frac{ny}{2}.\label{xxx2}
\end{align}
Adding (\ref{xxx2}) to (\ref{mmm10}), we have
\be\sum_{n,m\ge 1; \, n \mbox{ \footnotesize odd}; \, m \mbox{ \footnotesize even} }[(\lambda+\mu)\beta_{n,m} +\nu\beta_{n,m}^2]  a_{n,m}\sin \frac{mx}{2}\sin \frac{ny}{2}=0\ee
or
$a_{n,m}\equiv 0$. This leads to a contradiction.  Hence
\bbe \label{yy3}a_{n,m}\not\equiv a_{m,n}.\bee

By  (\ref{xxx3}), we have
\bbe(\Delta (-\lambda -\mu +\nu \Delta)\psi, \psi^*)= \!\!\!\!\!\!\!\!\!\sum_{n,m\ge 1; \, n \mbox{ \footnotesize odd}; \, m \mbox{ \footnotesize even}} \!\!\!\!\!\!\!\!\!\beta_{n,m}(\lambda+\mu+\nu\beta_{n,m})(\beta_{n,m}-2) a_{n,m} a_{m,n}\label{aaa11}
\bee

If $a_{1,2}=a_{2,1}$, we use (\ref{aa20}), (\ref{yy3}) and  Cauchy inequality to  obtain from (\ref{aaa11}) that
\be
\lefteqn{(\Delta (-\lambda -\mu +\nu \Delta)\psi, \psi^*)}
\\
&<& \beta_{1,2}(\lambda+\mu+\nu\beta_{1,2})(\beta_{1,2}-2) a_{1,2}^2
\\
&&+\sum_{n,m\ge 1; \, n \mbox{ \footnotesize odd}; \, m \mbox{ \footnotesize even};\,\beta_{n,m}>2}\beta_{n,m}(\lambda+\mu+\nu\beta_{n,m})(\beta_{n,m}-2) \frac{a_{n,m}^2 +a_{m,n}^2}2 \nonumber\label{aaa110}
\\
&=& \beta_{1,2}(\lambda+\mu+\nu\beta_{1,2})(\beta_{1,2}-2) \frac{a_{1,2}^2+a_{2,1}^2}2
\\
&&+\frac12\sum_{n,m\ge 1; \, n \mbox{ \footnotesize odd}; \, m \mbox{ \footnotesize even};\,\beta_{n,m}>2}\beta_{n,m}(\lambda+\mu+\nu\beta_{n,m})(\beta_{n,m}-2) a_{n,m}^2  \nonumber\label{aaa110}
\\
&&+\frac12\sum_{n,m\ge 1; \, n \mbox{ \footnotesize even}; \, m \mbox{ \footnotesize odd};\,\beta_{n,m}>2}\beta_{n,m}(\lambda+\mu+\nu\beta_{n,m})(\beta_{n,m}-2) a_{n,m}^2
\ee
which equals zero due to (\ref{aaa22}) and (\ref{aaa222}). This gives  (\ref{two}) under the condition $a_{1,2}=a_{2,1}$.

Actually, we can assume  $a_{1,2}=a_{2,1}=1$, since  the spectral problem is linear. This gives (\ref{two}) for the eigenfunction $\psi\in E_2$.
This derivation also implies the validity of (\ref{two}) when  the eigenfunction $\psi \in E_3$.
The proof of Theorem \ref{th2} is completed.

\end{proof}

\def\Ran{\mathrm Ran}
\def\Dom{\mathrm Dom}

\def\L{\mathcal L}
\section{Proof of  Theorem \ref{main} (ii)}

\begin{proof} {\sl Firstly,} we introduce a flow invariant  space so that Fredholm alternative theory can be applied for the critical eigenfunction  $\psi_c\in E_i$.
Theorem \ref{th2} shows that $\psi_c \in E_{i }$ is a simple eigenfunction. That is,
\be \dim\Bigg \{ \psi\in E_{i }\, \Big|\, 0=-\mu_c \Delta\psi+\nu_c \Delta^2\psi + J(\psi_0, (\Delta+2)\psi)\Bigg\} =1
\ee
and (\ref{two}) holds true.
$E_{i}$ is an invariant space  of linear spectral problem (\ref{LL}) but is not flow invariant for  nonlinear problem (\ref{NS1})-(\ref{bdc}).
The linear perturbation  part of the bifurcating solution is expected to be in the  eigenfunction space  span$\{\psi_c\}\subset E_i$. Therefore, we need to consider the bifurcation in a nonlinear  flow invariant space of (\ref{NS1})-(\ref{bdc}) and the space is generated nonlinearly from the linear space $E_{i }$.

To do so, we use the summation notation
\be
\sum_{1}&=&\sum_{1\le n,\,m\, \,{\sl odd}}+ \sum_{2\le n,\,m \,\,{\sl even}},
\\
\sum_2&=&\sum_{1\le n\,\, {\sl odd};\,2\le m \,\,{\sl even}}+ \sum_{2\le n,\, m \,\, {\sl even}},
\\
\sum_3&=&\sum_{2\le n\, \,{\sl even};\,1\le m \,\,{\sl odd}}+ \sum_{2\le n,\, m \,\, {\sl even}}.
\ee
We define  the $H^4$ subspaces
\be H^4_{i}=\Bigg\{ \psi \in H^4 \,\Big|\,\, \psi =\sum_{i} a_{n,m} \sin \frac{nx}2\sin \frac{my}2
\Bigg\} \,\,\, \mbox{ for }\,\,\, i=1,2,3,
\ee
and the $L_2$ subspaces
\be H_{i} =\Bigg\{ \psi =\sum_{i} a_{n,m} \sin \frac{nx}2\sin \frac{my}2\Big| \,\,\, \|\psi\|_{L_2} =\Big( \sum_i a_{n,m}^2\Big)^\frac12 <\infty \Bigg\} \,\,\, \mbox{ for }\,\,\, i=1,2,3.\ee

This  definition  ensures  $H_i^4 \supset E_i$ for $i=1, 2, 3$  and $H_i^4$ is orthogonal to $E_j$ if $i\ne j$.  Hence the assertion of Theorem \ref{th2} remains valid when $E_{i }$ is replaced by $H^4_{i }$. That is, the eigenfunction simplicity property holds true in $H^4_{i }$. The nonlinear  flow invariant property of $H^4_{i }$ is valid  in the following sense
\bbe \label{inv}\Delta^{-2}J(\varphi,\Delta\phi) \in H^4_{i }  \mbox{ whenever $\varphi,\,\phi \in H^4_{i }$}.\bee
This invariance property  is confirmed by the estimate
\be \|\Delta ^{-2}J(\varphi,\Delta \phi)\|_{H^4}&\le&  \|J(\varphi,\Delta \phi)\|_{L_2}
 \\
 &\le& \|\nabla \varphi \|_{L_4} \| \nabla \Delta\phi\|_{L_4} \le C \| \varphi \|_{H^4} \| \phi \|_{H^4},
\ee
due to H\"older inequality and Sobolev imbedding, and the multiplication computation
\be J(\varphi, \Delta \phi) &=&\Bigg( \sum_{i} a_{n,m} \frac n2\cos\frac{nx}2\sin\frac{my}2\Bigg)\Bigg(\sum_{i} b_{n,m} \frac {m}2\beta_{n,m}\sin\frac{nx}2\cos\frac{my}2\Bigg)
\\
&&- \Bigg(\sum_{i} a_{n,m} \frac m2\sin\frac{nx}2\cos\frac{my}2\Bigg)\Bigg(\sum_{i} b_{n,m} \frac {n}2\beta_{n,m}\cos\frac{nx}2\sin\frac{my}2\Bigg)
\\
&=& \sum_i c_{n,m} \sin \frac {nx}2 \sin \frac{my}2
\ee
for   coefficients $c_{n,m}$ rearranged from $a_{n,m}$ and $b_{n,m}$. Here we use the  functions
\be \varphi =\sum_i a_{n,m} \sin \frac{nx}2\sin \frac{my}2 \in H^4_i\,\,\mbox{ and }\,\, \phi = \sum_i b_{n,m} \sin \frac {nx}2 \sin \frac{my}2\in H^4_i.
\ee

Now we rewrite the critical spectral problem as
\be \L \psi_{c}= 0 \,\,\,\mbox{ for }\,\,\, \L\psi= -\mu_c \Delta\psi +\nu_c \Delta^2 \psi+ J(\psi_0, (\Delta+2)\psi) .\ee
We see that $\L$ maps $H^4_{i }$ into $H_{i }$.
 To employ the Fredholm theory, we define the range of $\L$ as
\be \Ran(\L)=\Big\{ \varphi \in H_{i }\Big|\,\, \mbox{ there exists $\phi\in H^4_{i }$ so that } \L\phi =\varphi\Big\}.
\ee
 It readily seen that $\Ran(\L )$ is the space orthogonal to $\psi_c^*$, the conjugate eigenfunction of $\psi_{c}$, in the following sense:
\be \Ran(\L ) = \Big \{ \psi \in H_{i }\Big|\, \, (\psi,\psi_{i }^*)=0\Big\}.\ee
By the Fredholm alternative theory of Laplacian operators, $\L$ has an inverse operator
\bbe \L ^{-1}: \Ran(\L ) \mapsto H_{i }^4\label{c7}
\bee
 so that
\bbe \label{L}\|\L ^{-1}\psi\|_{H^4_{i }} \leq C_1 \| \psi \|_{L_2},\,\,\psi \in \Ran(\L )\bee
for a constant $C_1$.

{\sl Secondly,}  following Rabinowitz \cite{Rab} on a B\'enard problem, we seek the  secondary   steady-state solution $(\psi,\nu,\mu)$  branching from the bifurcation point $(\psi_0,\nu_{c},\mu_{c})$ in the direction of $\psi_{c}$ as
\bbe \psi=\psi_0 + \epsilon \psi_{c} + \epsilon^2 \psi_{i },\,\,\, \nu =\nu_{c} + \epsilon \sigma \nu_{c}, \,\,\mu =\mu_{c} + \epsilon \sigma \mu_{c}\label{d1}
\bee
 for a function $\psi_{i }\in H^4_{i }$ and a  real  $\sigma$, provided that $\epsilon>0$ is sufficiently small.

Substitution of the predicted solution (\ref{d1}) into the stationary form of  (\ref{NS1}),  or the equation
\begin{align*} 0
=&(-\mu  \Delta+ \nu\Delta^2 )( \psi-\psi_0)+ J(\psi_0,(2+\Delta)(\psi-\psi_0))+J(\psi-\psi_0,\Delta(\psi-\psi_0))
\\
=&(-(\mu -\mu_c) \Delta+ (\nu-\nu_c)\Delta^2 )( \psi-\psi_0)+ \L(\psi-\psi_0)+J(\psi-\psi_0,\Delta(\psi-\psi_0)),
\end{align*}
produces the equation
\be 
0&=& ( -\epsilon \sigma \mu_{c}\Delta+ \epsilon \sigma \nu_{c}\Delta^2) (\epsilon \psi_{c} + \epsilon^2 \psi_{i })+\L  (\epsilon \psi_{c} + \epsilon^2 \psi_{i })
\\
&&+J( \epsilon \psi_{c} + \epsilon^2 \psi_{i },\Delta (\epsilon \psi_{c} + \epsilon^2 \psi_{i })).
\ee
Since $\L \psi_{c}=0$, the previous equation  can be rewritten as
 \bbe\label{x2}
 \sigma(-\mu_{c}\Delta +\nu_{c}\Delta^2) \psi_{c} +\L  \psi_{i }=F_\epsilon(\sigma,\psi_{i })
 \bee
with
\be F_\epsilon(\sigma,\psi_{i })
&=&- \epsilon \sigma(-\mu_{c}\Delta +\nu_{c}\Delta^2)  \psi_{i }-J(  \psi_{c} + \epsilon \psi_{i },\Delta ( \psi_{c} + \epsilon\psi_{i })).
\ee

To show the existence of  the unknowns $\psi_{i }$ and $\sigma$, we take $L_2$ inner product of (\ref{x2}) with $\psi_c^*$ to obtain
\bbe\label{x22}
 \sigma((-\mu_{c}\Delta +\nu_{c}\Delta^2) \psi_{c},\psi_c^*) +(\L  \psi_{i },\psi_c^*)=(F_\epsilon(\sigma,\psi_{i }),\psi_c^*).
 \bee
Applying Theorem \ref{th2} with $E_{i }$ replaced by $H^4_{i }$, the invariance property (\ref{inv}) and the identity
\be (\L \psi_{i },\psi_{c}^*)=(\psi_{i },\L ^*\psi_{c}^*)=0,
\ee
we may rewrite (\ref{x22}) as
\bbe \label{x5}\sigma = \frac{(F_\epsilon(\sigma,\psi_{i }),\psi^*_{c})}{((-\mu_{c}\Delta +\nu_{c}\Delta^2)\psi_{c},\psi_{c}^*)}.
\bee
The combination of (\ref{x2}) and (\ref{x5}) yields
\bbe \label{x4x}
\L\psi_{i }=  F_\epsilon(\sigma,\psi_{i })-  \frac{(-\mu_{c}\Delta +\nu_{c}\Delta^2)\psi_{c}(F_\epsilon(\sigma,\psi_{i }),\psi^*_{c})}{((-\mu_{c}\Delta +\nu_{c}\Delta^2)\psi_{c},\psi_{c}^*)}.
\bee
The nonlinear invariance property (\ref{inv}) implies  $F_\epsilon(\sigma,\psi_{i })\in H_{i }$.  It is readily seen that the right-hand side of (\ref{x4x}) is in $\Ran(\L)$.
Therefore, we may use the inverse of $\L$ to produce
\bbe \label{x4}
\psi_{i }=\L ^{-1} \Big( F_\epsilon(\sigma,\psi_{i })-  \frac{(-\mu_{c}\Delta +\nu_{c}\Delta^2)\psi_{c}(F_\epsilon(\sigma,\psi_{i }),\psi^*_{c})}{((-\mu_{c}\Delta +\nu_{c}\Delta^2)\psi_{c},\psi_{c}^*)}
\Big).
\bee
For simplicity of notation, we rewrite the equations  (\ref{x5}) and (\ref{x4}) in the following form
\bbe (\sigma,\psi_{i })= G_\epsilon(\sigma,\psi_{i }),
\bee
where the two components of the operator $G_\epsilon(\sigma,\psi_{i })$ represent respectively the right-hand sides of (\ref{x5}) and (\ref{x4}).
Thus, to seek the solution $(\psi,\nu,\mu)$ in (\ref{d1}) becomes to confirm the existence of  the fixed point for the operator $G_\epsilon$.

{\sl Finally,}  it remains to prove that $G_\epsilon$ is a contraction operator  mapping a complete metric space into itself.
The  complete matric space is defined as
\be X = \Bigg\{ (\sigma,\psi)\in (-\infty,\infty)\times H^4_{i }\Big|\,\, \,\, \|(\sigma,\psi)\|_X= |\sigma| + \|\psi\|_{H^4_{i }}\leq C\Bigg\}.
\ee
 Here $C>0$ is a constant to be defined afterward.

To show the contraction property, we use the boundedness of $\L ^{-1}$ in (\ref{L}), the expressions  (\ref{x5}) and (\ref{x4}), and  H\"older inequality to produce
\be
\|G_\epsilon(\sigma,\psi_{i })\|_X
&\leq&\Bigg(\frac{\|\psi^*_{c}\|_{L_2}}{|((-\mu_{c}\Delta +\nu_{c}\Delta^2)\psi_{c},\psi_{c}^*)|}
\\&&+ C_1  \Big(1+ \frac{\|(-\mu_{c}\Delta +\nu_{c}\Delta^2)\psi_{c}\|_{L_2}\|\psi^*_{c}\|_{L_2}}{|((-\mu_{c}\Delta +\nu_{c}\Delta^2)\psi_{c},\psi_{c}^*)|}\Big)\Bigg)\|F_\epsilon(\sigma,\psi_{i })\|_{L_2}.
\ee
This yields, by renaming  the constant bounded by  the large brackets  in the right-hand side of the previous equation  as $C_2$,
\be
\|G_\epsilon(\sigma,\psi_{i })\|_X
&\leq& C_2\|F_\epsilon(\sigma,\psi_{i })\|_{L_2}. 
\ee
Hence, by   H\"older inequality and  Sobolev imbedding inequality, we have
\be
\|G_\epsilon(\sigma,\psi_{i })\|_X&\le & C_2\|J(  \psi_{c} + \epsilon \psi_{i },\Delta ( \psi_{c} + \epsilon\psi_{i }))
+\epsilon \sigma(-\mu_{c}\Delta +\nu_{c}\Delta^2)  \psi_{i }\|_{L_2}
\\
&\le & C_2\Big(\|\nabla(  \psi_{c} + \epsilon \psi_{i })\|_{L_4} \|\nabla \Delta ( \psi_{c} + \epsilon\psi_{i })\|_{L_4}
\\&&+\epsilon \sigma\|(-\mu_{c}\Delta +\nu_{c}\Delta^2)  \psi_{i }\|_{L_2}\Big)
\\
&\leq & C_3\Big( \|\psi_{c}\|_{H^4_{i }}^2+ 2\epsilon \|\psi_{c}\|_{H^4_{i }}\|\psi_{i }\|_{H^4_{i }}+ \epsilon^2\|\psi_{i }\|_{H^4_{i }}^2+ \epsilon \sigma\|\psi_{i }\|_{H^4_{i }}\Big)
\\
&\leq & C_4\Big( 1+ \epsilon  C+ \epsilon^2C^2+\epsilon C^2\Big)
\ee
for  the  constants  $C_k$ independent of $(\sigma,\psi_i)\in X$ and $\epsilon>0$.
Therefore, we obtain
\bbe\label{in}
\|G_\epsilon(\sigma,\psi_{i })\|_X\le C\,\,\mbox{ for  $(\sigma,\psi_{i })\in X$},
\bee
provided that
\be
\frac{C}2 = C_4
\ee
and
\be
C_4(\epsilon  C+ \epsilon^2C^2+\epsilon C^2)= C_4(\epsilon  + 2\epsilon^2C_4+2\epsilon C_4) C \le \frac12 C,
\ee
by taking  $\epsilon>0$ sufficiently small.
The property (\ref{in}) implies the  injection property $ G_\epsilon: X \mapsto X.$

Arguing in the same manner, we have the  contraction property:
\be
\|G_\epsilon(\sigma,\psi_{i })-G_\epsilon(\sigma',\psi_{i }')\|_X\leq \frac12 \|(\sigma,\psi_{i })-(\sigma',\psi_{i }')\|_X
\ee
for $ (\sigma,\psi_{i }), \,(\sigma',\psi_{i }')\in X$, provided that $\epsilon>0$  sufficiently small.
Therefore, by the Banach contraction mapping principle, the operator $G_\epsilon$ with small $\epsilon>0$ admits a unique fixed point $(\sigma,\psi_{i })\in X$.
This confirms the existence of the steady-state solution $(\psi, \mu,\nu)$  of (\ref{NS1}) and (\ref{bdc}) in the form of (\ref{d1}).

The uniform boundedness of the $\sigma$ and $\psi_i$ with respect to $\epsilon$  is given by (\ref{in}). The property $\psi_i \in H^4_i/\mbox{\rm span}\{\psi_c\}$ is implied from (\ref{x4}) due to the Fredholm operator property  $\mathcal{L}^{-1}: \Ran(\mathcal{L}) \mapsto H_i^4/\mbox{\rm span}\{\psi_c\}$.
The proof of Theorem \ref{main} is completed.

\end{proof}
\section{Discussion   on Theorem \ref{main}}

\subsection{Discussion on  the existence of critical spectral solution}

From viewpoint of numerical computation, a  critical spectral solution   $(\nu_c,\mu_c,\psi_c)$  of   spectral problem (\ref{LL}) with $\lambda=0$ can be  calculated by using  algebraic equation (\ref{mmm1}). However, from viewpoint of rigorous analysis, its  existence remains unsolved.
In the unidirectional Kolmogorov flow problem,  its linear spectral equation can be transformed into a continuous fraction equation, from which the  existence of critical value is obtained. The coefficients of the corresponding eigenfunction can expressed as multiplications of continuous fractions ( see \cite{Chen2002,Chen2005,Yud}) for both real and non-real eigenfunctions.   However, this continuous fraction approach is no longer applicable to the present multi-directional flow problem. To facilitate the understanding of the difficulty, we  discuss the problem  with the aid of  a truncated form.

For simplicity, we only consider  eigenfunction in $E_1$.
Linear spectral problem  (\ref{LL}) or (\ref{mmm1}) with $\lambda=0$  can be rewritten  in the form
\bbe
0&=&\sum_{-2 \le n,m; n+m \le k;\, n,m \,\mbox{\footnotesize odd}}\Big\{\alpha_{n,m} b_{n,m}
 +(n-m)[b_{n-2,m-2}
-b_{n+2,m+2}]\nonumber
\\
&& +(n+m)[b_{n-2,m+2}
- b_{n+2,m-2} ]\Big\}\sin \frac{nx}{2}\sin \frac{my}{2}\label{mmm11}
\bee
as $k\to \infty$. Here
\be\alpha_{n,m} = 8 \frac{\beta_{n,m}(\mu + \nu \beta_{n,m})}{\beta_{n,m}-2}, \,\, b_{n,m} =(\beta_{n,m}-2) a_{n,m} \,\mbox{ for }\, n,m \ge 1, \,\mbox{ otherwise } \, b_{n,m} =0. \ee
If $A_k$ represents the square matrix of influence coefficients and $B_k$  the truncated eigenvector consists of $b_{n,m}$, then (\ref{mmm11}) becomes
\be A_k B_k=0\ee
For example, for the approximating spectral problem for $k=8$, we  have the 10-dimensional truncation equation
\be
&&0=\alpha_{1,1}b_{1,1}-2b_{1,3}+2b_{3,1}
\\
 && 0=  \alpha_{1,3} b_{1,3}-4b_{1,5}+2b_{3,5}+2b_{1,1}-4b_{3,1}
\\
&& 0=   \alpha_{3,1} b_{3,1}+4b_{5,1}-2b_{5,3}-2b_{1,1}+4 b_{1,3}
\\
&& 0=  \alpha_{1,5} b_{1,5}-6b_{1,7}+4b_{1,3}-6b_{3,3}
\\
&& 0=   \alpha_{5,1} b_{5,1}+6b_{7,1}-4 b_{3,1}+6 b_{3,3}
\\
&& 0= \alpha_{3,3} b_{3,3}+6 b_{1,5}-6 b_{5,1}
\\
&& 0=  \alpha_{1,7} b_{1,7}+6b_{1,5}-8b_{3,5}
\\
&& 0=   \alpha_{7,1} b_{7,1}-6b_{5,1}+8 b_{5,3}
\\
&& 0= \alpha_{3,5} b_{3,5}-2b_{1,3}+8 b_{1,7}-8 b_{5,3}
\\
&& 0= \alpha_{5,3} b_{5,3}+2b_{3,1}+8b_{3,5}- 8b_{7,1}
\ee
or
{\footnotesize \bbe
0=A_8 B_8=\left(\begin{array}{cccccccccccccc}
\alpha_{1,1}&-2&2&0& 0& 0    &0&0& 0&0
\\
2 & \alpha_{1,3}& -4& -4& 0&0   &0&0& 2&0
\\
-2& 4& \alpha_{3,1}&0&4&0      &0&0& 0&-2
\\
0 &4& 0&\alpha_{1,5}&0&-6     &-6&0& 0&0
\\
0& 0&-4&0&\alpha_{5,1}&6  &0&6& 0&0
\\
0&0&0&6&-6&\alpha_{3,3}   &0&0& 0&0
\\
0&0&0&6&0&0  &\alpha_{1,7}&0& -8&0
\\
0&0&0&0&-6&0 &0&\alpha_{7,1}& 0&8
\\
0&-2&0&0&0&0 &8&0& \alpha_{3,5}&-8
\\
0&0&2&0&0&0 &0&-8& 8&\alpha_{5,3}
\end{array}\right)\left( \begin{array}{c} b_{1,1}\\
 b_{1,3}\\ b_{3,1} \\ b_{1,5} \\ b_{5,1} \\ b_{3,3}\\b_{1,7}\\
 b_{7,1}\\ b_{3,5} \\ b_{5,3}
 \end{array} \right).\label{A8}
 \bee
 }
 The truncation seems quit harsh, but the solution to this equation is reasonably  close to that of the original one. The existence of the corresponding spectral critical solution becomes the existence
 of  the  critical value $(\nu,\mu)=(\nu_c,\mu_c)\ne (0,0)$ satisfying  the determinant equation
\be \det(A_8(\nu,\mu))=0.\ee
Since $\alpha_{1,1}<0$ and $\alpha_{n,m}>0$ for $(n,m)\ne (1,1)$, we see that $\lim_{\nu+\mu\to \infty}\det(A_8(\nu,\mu))\to -\infty$. Therefore, the existence of the root $(\nu_c,\mu_c)$ can be confirmed if one finds some $(\nu,\mu)$ so that  $\det(A_8(\nu,\mu))>0$. However, it is laborious to prove this positivity property although $A_k$ is always skew-symmetric when $\nu=\mu=0$.
\begin{figure}
 \centering
 \includegraphics[height=.26\textwidth, width=.45\textwidth]{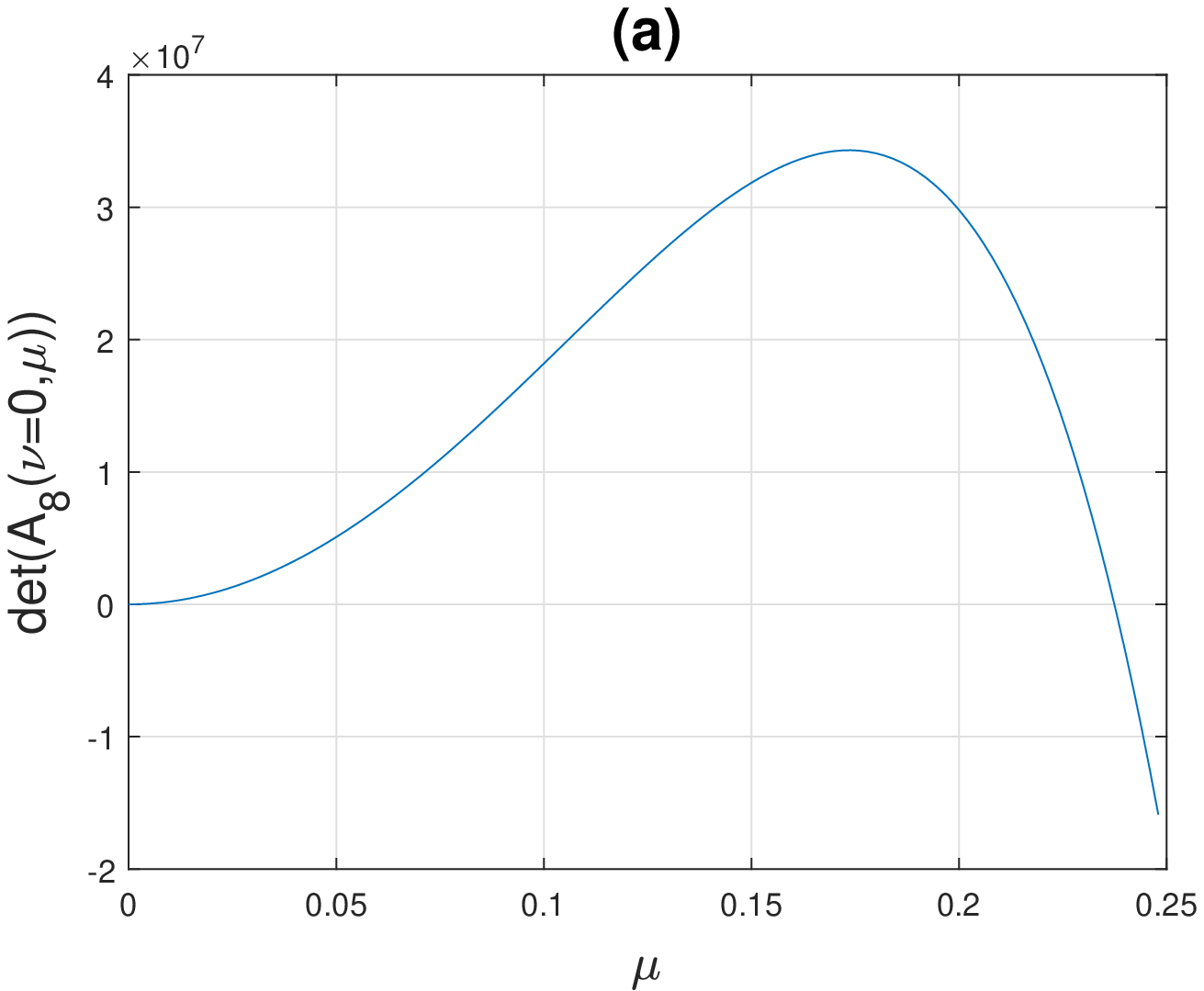}
\includegraphics[height=.26\textwidth, width=.45\textwidth]{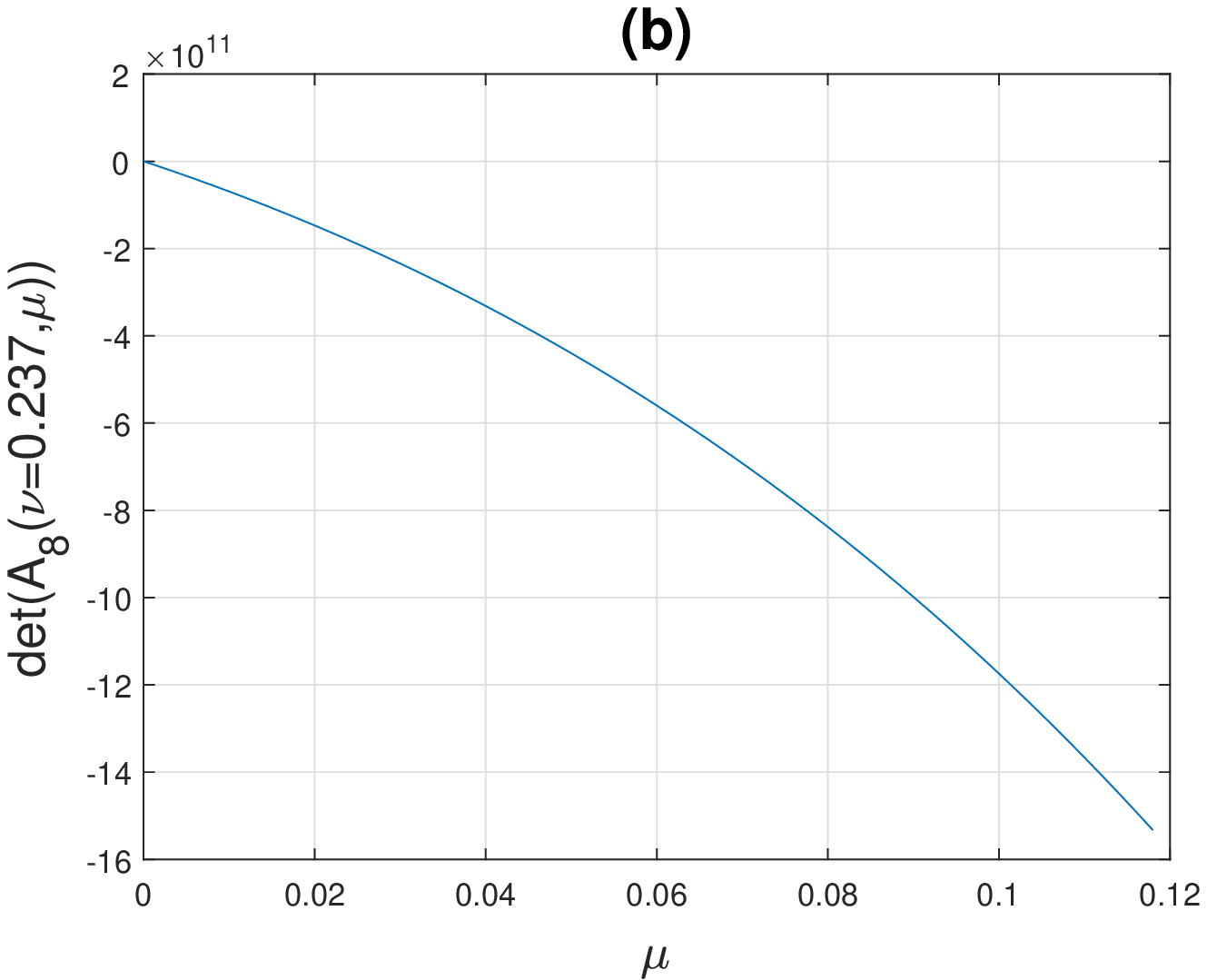}
\includegraphics[height=.26\textwidth, width=.45\textwidth]{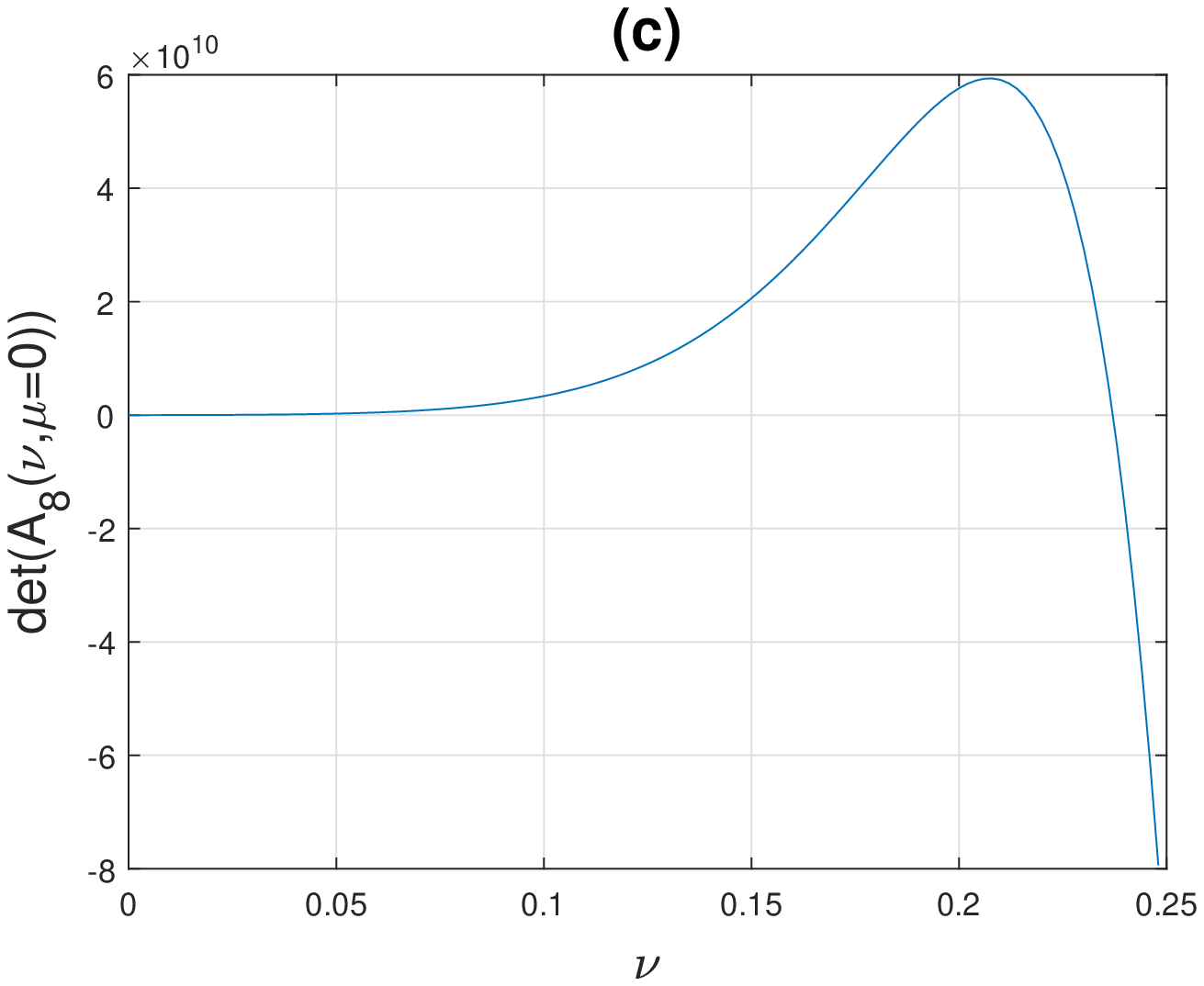}
\includegraphics[height=.26\textwidth, width=.45\textwidth]{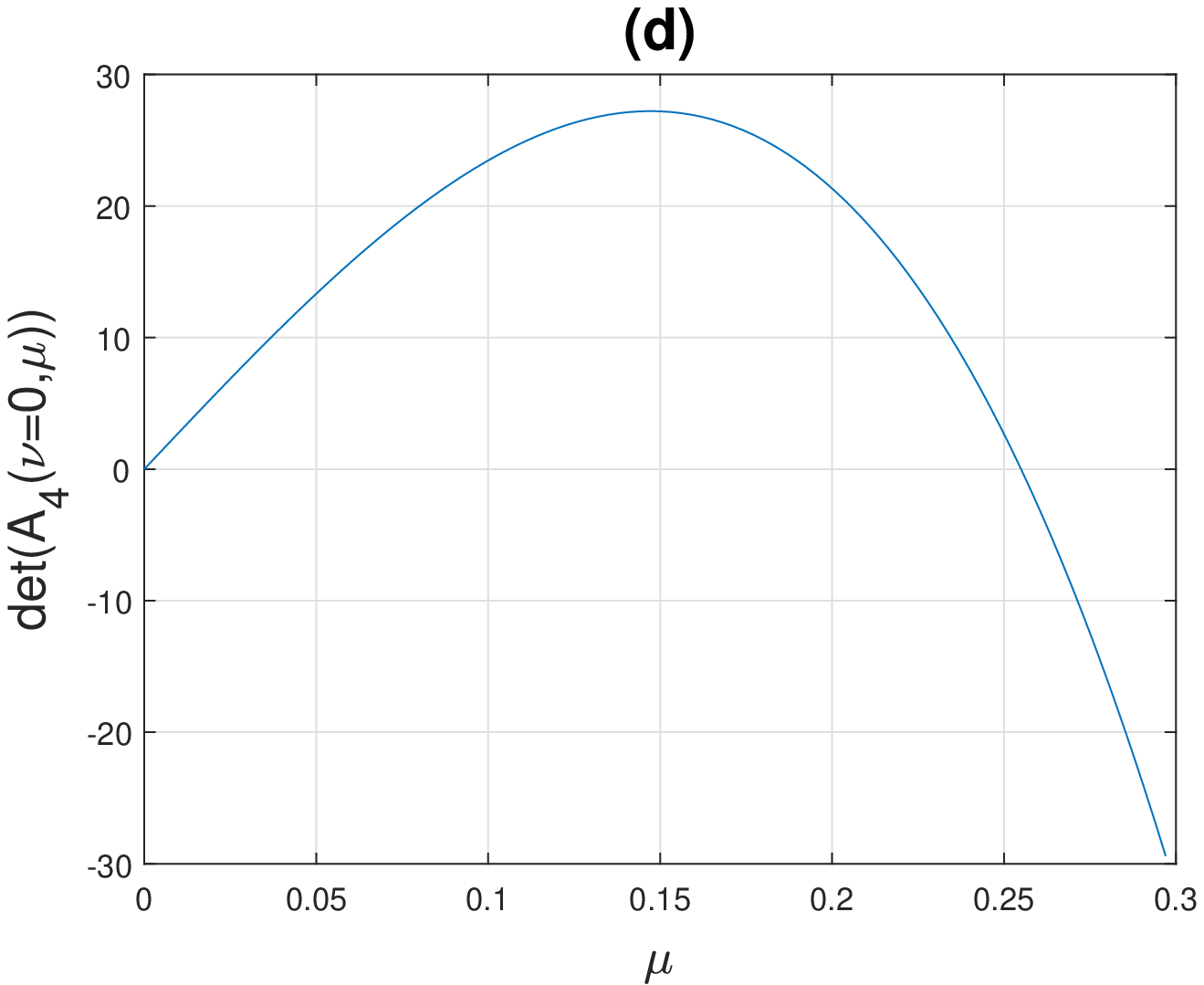}
 \caption{Selected computation results for the polynomials $\det(A_8(\nu,\mu))$ and $\det(A_4(\nu,\mu))$ crossing horizontal zero lines.}\label{ff2}
\end{figure}
 To show the positivity property and then the root existence, we use  the selected  computation result in  Figure \ref{ff2}, which shows the roots $(\nu_c,\mu_c)\approx (0,0.236)$ from (a) and  $(\nu_c,\mu_c)\approx (0.237,0)$ from (b)-(c). This  result is   comparable  with that  computed by Thess \cite{Thess}.

 Similar behaviour exists for  the polynomial det$(A_k)$ as $k$ increases. Actually, the behavior of the polynomial $\det(A_8)$ such as  transversal crossing the horizontal zero line is comparable  to that of  the third degree polynomial (see Figure \ref{ff2} (d))
\be \det(A_4)= \alpha_{1,1} \alpha_{1,3} \alpha_{3,1} + 16 \alpha_{1,1} + 4 \alpha_{1,3}+4 \alpha_{3,1},\ee although the deviation  with respect to the  root increases. Here  $A_4$  is the $3\times 3$ matrix in the top-left corner of $A_8$.

\subsection{Connection to Crandall- Rabinowitz bifurcation theorem}

The bifurcation result can be obtained  by Crandall- Rabinowitz bifurcation theorem \cite{CR}, although the secondary flow shown in  Theorem \ref{main} is more informative due to the construction by  Banach fixed point theorem. As an alternative way, we would like to show the existence of  the secondary flow as a consequence of the following.
\begin{theorem}\label{tho} (Crandall and Rabinowitz \cite[Theorem 1.7]{CR}) Let $X$, $Y$ be Banach spaces, $V$ a neighborhood of $0$ in $X$ and
\bbe F: (-1,1)\times V \mapsto Y \label{In}\bee
have the properties
\begin{description}
\item[(a)] $F(\tau,0)=0$ for $ |\tau|<1$,
\item[(b)] The partial derivatives $F_\tau$, $F_\psi$ and $F_{\tau\psi}$ exist and are continuous,
\item[(c)] The kernel space $N(F_\psi(0,0))$ and the orthogonal compliment $Y/\Ran(F_\psi(0,0))$ are one-dimensional,
\item[(d)] $F_{\tau\psi}(0,0) \psi_c\not \in \Ran(F_\psi(0,0))$, where
\be N(F_\psi(0,0) )=\mbox{\rm span}\{\psi_c\}. \ee
\end{description}
Then there is a neighborhood $U$ of $(0,0)$ in $R\times X$, an interval $(-a,a)$, and continuous functions
\be  \kappa: (-a,a) \mapsto (-\infty,\infty),\,\,\,\,\,\,  \Psi: (-a,a) \mapsto X/N(F_\psi(0,0))
\ee
 such that  $\kappa(0)=0,\, \Psi(0) =0$
and
\be F^{-1}(0)\cap U = \Big\{ ( \kappa(\epsilon), \epsilon \psi_c +\epsilon \Psi(\epsilon)\Big|\, \,\,|\epsilon | <a \Big\} \cup \Big\{ (\tau,0) \Big|\, (\tau,0) \in U\Big\}.
\ee
\end{theorem}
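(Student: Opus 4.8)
The plan is to prove this by a Lyapunov--Schmidt reduction that factors out the known trivial branch $\{(\tau,0)\}$ and then applies the implicit function theorem. Write $L=F_\psi(0,0)$. By hypothesis (c) the kernel $N(L)=\mathrm{span}\{\psi_c\}$ is one dimensional, so first I would fix a closed complement $Z$ with $X=\mathrm{span}\{\psi_c\}\oplus Z\cong\mathrm{span}\{\psi_c\}\oplus X/N(L)$; likewise $\Ran(L)$ has codimension one, so I fix a one-dimensional complement $W$ with $Y=\Ran(L)\oplus W$ and write $Q$ for the projection of $Y$ onto $W$ along $\Ran(L)$.

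To remove the trivial branch I would substitute $\psi=\epsilon(\psi_c+z)$ with $z\in Z$ and define
\[
G(\tau,\epsilon,z)=
\begin{cases}
\epsilon^{-1}F(\tau,\epsilon(\psi_c+z)), & \epsilon\ne 0,\\
F_\psi(\tau,0)(\psi_c+z), & \epsilon=0.
\end{cases}
\]
Using hypothesis (a) together with the fundamental-theorem-of-calculus representation $\epsilon^{-1}F(\tau,\epsilon\phi)=\int_0^1 F_\psi(\tau,s\epsilon\phi)\phi\,ds$ (valid since $F(\tau,0)=0$), the map $G$ is continuous and continuously differentiable in $(\tau,z)$ across $\epsilon=0$, with $G(0,0,0)=L\psi_c=0$. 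This is the step I expect to be the main technical obstacle: one must justify that the removable singularity at $\epsilon=0$ really yields the claimed regularity of $G$, since it is precisely the continuity (not merely the existence) of $F_\psi$ and $F_{\tau\psi}$ postulated in (b) that lets the integral and its $\tau$-derivative pass to the limit uniformly in a neighborhood of the base point.

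Next I would compute the partial differential of $G$ in the variables $(\tau,z)$ at $(0,0,0)$:
\[
D_{(\tau,z)}G(0,0,0)[\dot\tau,h]=\dot\tau\,F_{\tau\psi}(0,0)\psi_c+Lh,\qquad \dot\tau\in\mathbb{R},\ h\in Z,
\]
and claim this is an isomorphism of $\mathbb{R}\times Z$ onto $Y$. Indeed $L$ restricts to an isomorphism of $Z$ onto $\Ran(L)$ (injective because $Z\cap N(L)=\{0\}$, surjective by definition of $\Ran(L)$), while hypothesis (d) gives $QF_{\tau\psi}(0,0)\psi_c\ne 0$, so the $\dot\tau$-direction supplies exactly the missing one-dimensional complement $W$. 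Surjectivity and injectivity then follow by projecting onto $W$ along $\Ran(L)$ to recover $\dot\tau$, and inverting $L|_Z$ to recover $h$.

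Finally I would invoke the implicit function theorem for $G(\tau,\epsilon,z)=0$ at $(0,0,0)$: it produces $a>0$ and continuous functions $\kappa(\epsilon)$, $z(\epsilon)$ with $\kappa(0)=0$, $z(0)=0$ solving $G(\kappa(\epsilon),\epsilon,z(\epsilon))=0$, uniquely in a neighborhood of the base point. Setting $\Psi(\epsilon)=z(\epsilon)\in Z\cong X/N(F_\psi(0,0))$ and undoing the substitution gives, for $\epsilon\ne 0$, the nontrivial solutions $\psi=\epsilon\psi_c+\epsilon\Psi(\epsilon)$ with parameter $\tau=\kappa(\epsilon)$. The local uniqueness furnished by the implicit function theorem, combined with the equivalence that on the decomposition $\psi=\epsilon(\psi_c+z)$ one has $F(\tau,\psi)=0$ iff $\psi=0$ or $G(\tau,\epsilon,z)=0$, then yields that $F^{-1}(0)\cap U$ consists of exactly the nontrivial branch together with the trivial branch $\{(\tau,0)\}$, which is the asserted description.
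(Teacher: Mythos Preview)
The paper does not supply its own proof of this statement: Theorem~\ref{tho} is quoted verbatim from Crandall and Rabinowitz \cite[Theorem~1.7]{CR} and is used as a black box to derive Theorem~\ref{main2}. So there is no in-paper argument to compare against.

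Your proposed proof is the standard Lyapunov--Schmidt reduction, essentially the argument given by Crandall and Rabinowitz themselves: split off the one-dimensional kernel, divide out the trivial branch by writing $\psi=\epsilon(\psi_c+z)$ and setting $G=\epsilon^{-1}F$, use the integral representation $\epsilon^{-1}F(\tau,\epsilon\phi)=\int_0^1 F_\psi(\tau,s\epsilon\phi)\phi\,ds$ to make $G$ continuous across $\epsilon=0$, and then invoke the implicit function theorem after checking that $D_{(\tau,z)}G(0,0,0)$ is an isomorphism via hypothesis~(d). The outline is correct. Two small points worth tightening if you write this up in full: first, the assertion that $L|_Z:Z\to\Ran(L)$ is an isomorphism uses that $\Ran(L)$ is closed (implicit in the statement that $Y/\Ran(L)$ is one-dimensional, hence $L$ is Fredholm of index zero) together with the open mapping theorem; second, the local uniqueness clause at the end requires a short argument that every solution $(\tau,\psi)\in U$ with $\psi\ne 0$ can be written as $\psi=\epsilon(\psi_c+z)$ with $z\in Z$ small, which follows from the direct-sum decomposition and continuity of the associated projection.
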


\begin{theorem}
\label{main2}
 Assume that the spectral problem (\ref{LL}) admits a critical solution $(\lambda,\psi, \nu,\mu)=(0,\psi_c,\nu_c,\mu_c)$ for $\nu_c>0$,  $\mu_c\ge 0$.
 Then   there exist  continuous functions
\be  \kappa : (-a,a) \mapsto (-\infty,\infty),\,\,\, \Psi: (-a,a) \mapsto  H^4/\mbox{span}\{\psi_c\}\, \mbox{ with }\,\, \kappa(0)=0,\,\, \Psi(0)=0,
 \ee
for a small constant $a>0$,  so that  system (\ref{NS1})-(\ref{bdc}) has the  bifurcating steady-state solution $(\psi,\nu,\mu)$ expressed as
\bbe &&\psi=\psi_0+\epsilon \psi_{c} +\epsilon \Psi(\epsilon),\,\,\,\nu=\nu_c+\nu_c\kappa(\epsilon),\,\,\mu=\mu_c +\mu_c\kappa(\epsilon), \,\mbox{ for }\, \epsilon \in (-a,a).\label{dd1}
\label{dd1x}
\bee
\end{theorem}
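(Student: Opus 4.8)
The plan is to verify that the abstract hypotheses (a)--(d) of Theorem \ref{tho} hold for a suitably chosen nonlinear map $F$, and then simply invoke the Crandall--Rabinowitz theorem to extract the bifurcating branch. The natural choice of spaces and map is to set $X = H^4/\mbox{span of nothing}$---more precisely $X = H^4_i$ (using the flow-invariant subspace built in the proof of Theorem \ref{main} (ii)), $Y = H_i = L_2$-closure of $H^4_i$, let $\tau$ be the bifurcation parameter defined so that $\nu = \nu_c(1+\tau)$, $\mu = \mu_c(1+\tau)$, and define
\bbe
F(\tau,\phi) = (-\mu_c(1+\tau)\Delta + \nu_c(1+\tau)\Delta^2)\phi + J(\psi_0,(2+\Delta)\phi) + J(\phi,\Delta\phi),
\bee
so that $\psi = \psi_0 + \phi$ solves the stationary form of (\ref{NS1})--(\ref{bdc}) precisely when $F(\tau,\phi) = 0$. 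Restricting to the invariant subspace $H^4_i$ is what makes $F$ well-defined as a map into $Y$ and, crucially, makes the kernel and cokernel one-dimensional rather than three-dimensional.

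The verification then proceeds hypothesis by hypothesis. Property (a), $F(\tau,0)=0$, is immediate since every term in $F$ is at least linear in $\phi$. Property (b), existence and continuity of $F_\tau$, $F_\phi$, $F_{\tau\phi}$, follows because $F$ is a polynomial (quadratic) in $\phi$ with coefficients affine in $\tau$; here one uses the H\"older/Sobolev estimate already recorded in the proof of Theorem \ref{main} (ii), namely $\|\Delta^{-2}J(\varphi,\Delta\phi)\|_{H^4} \le C\|\varphi\|_{H^4}\|\phi\|_{H^4}$, to see that the Jacobian terms define a bounded bilinear map $H^4_i \times H^4_i \to L_2$, hence are smooth. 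Property (c) is where Theorem \ref{th2} does the work: $F_\phi(0,0) = \mathcal{L}$ restricted to $H^4_i$, and the dimension estimate (\ref{one}) together with the nondegeneracy (\ref{two}) gives $\dim N(\mathcal{L}|_{H^4_i}) = 1$ with $N = \mbox{span}\{\psi_c\}$, while Fredholm theory for the shifted bi-Laplacian (the inverse $\mathcal{L}^{-1}: \Ran(\mathcal{L})\mapsto H^4_i$ of (\ref{c7})--(\ref{L})) shows $\Ran(\mathcal{L}|_{H^4_i})$ is closed of codimension one, with $Y/\Ran(\mathcal{L}) = \mbox{span}\{\psi_c^*\}$.

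The one genuinely substantive point is property (d): $F_{\tau\phi}(0,0)\psi_c \notin \Ran(\mathcal{L}|_{H^4_i})$, equivalently $(F_{\tau\phi}(0,0)\psi_c, \psi_c^*) \ne 0$. A direct computation gives $F_{\tau\phi}(0,0) = -\mu_c\Delta + \nu_c\Delta^2$, so this transversality condition reads exactly
\bbe
((-\mu_c\Delta + \nu_c\Delta^2)\psi_c, \psi_c^*) \ne 0,
\bee
which is precisely inequality (\ref{two}) of Theorem \ref{th2} (with $\lambda = 0$). Thus the main obstacle---proving the eigenvalue crossing is transverse---has already been dispatched by the spectral simplicity analysis, and no new estimate is needed. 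With (a)--(d) in hand, Theorem \ref{tho} yields continuous $\kappa$ and $\Psi$ with $\kappa(0)=0$, $\Psi(0)=0$, $\Psi: (-a,a)\mapsto H^4_i/\mbox{span}\{\psi_c\} \subset H^4/\mbox{span}\{\psi_c\}$, and the solution set near $(0,0)$ is the branch $\{(\kappa(\epsilon), \epsilon\psi_c + \epsilon\Psi(\epsilon))\}$; translating back through $\psi = \psi_0 + \phi$, $\nu = \nu_c(1+\kappa(\epsilon))$, $\mu = \mu_c(1+\kappa(\epsilon))$ gives exactly the asserted form (\ref{dd1x}), completing the proof.
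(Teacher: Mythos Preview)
Your proposal is correct and follows essentially the same route as the paper: both set $X=H^4_i$, $Y=H_i$, write the steady-state equation as $F(\tau,\phi)=0$ with $\nu=\nu_c(1+\tau)$, $\mu=\mu_c(1+\tau)$, and verify hypotheses (a)--(d) of Crandall--Rabinowitz using the invariance property (\ref{inv}), the simplicity estimate (\ref{one}), the Fredholm index-zero structure of $\mathcal L$, and the transversality (\ref{two}). One small remark: for (c) you only need (\ref{one}) (plus the existence hypothesis) to get $\dim N(\mathcal L|_{H^4_i})=1$; inequality (\ref{two}) is reserved for (d), exactly as you use it there.
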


It should be noted that Theorem \ref{main2} is  the same as Theorem \ref{main}, after we use the setting
\be \kappa(\epsilon) = \epsilon \delta(\epsilon,\psi_c), \,\,\,\Psi(\epsilon) = \epsilon \psi_i(\epsilon,\psi_c) \,\,\mbox{ for }\,\, 0\le \epsilon \le a,
\ee
\be \kappa(\epsilon) = -\epsilon \delta(-\epsilon,-\psi_c), \,\,\,\Psi(\epsilon) = \epsilon \psi_i(-\epsilon,-\psi_c) \,\,\mbox{ for } \,\,-a\le \epsilon <0.
\ee
\begin{proof}

 For employing Theorem \ref{tho}, we  formulate  the fluid motion problem into the functional framework of Theorem \ref{tho} and then show the validity of properties (a)-(d) and  (\ref{In}).
  Indeed, using the perturbation
  \bbe\label{pert}
  \psi=\psi_0+\psi',\,\, \nu= \nu_c+\tau\nu_c, \,\, \mu =\mu_c+\tau\mu_c,
  \bee
  we may rewrite (\ref{NS1})  as
\bbe  F(\tau,\psi)=0, \bee
for
\bbe F(\tau,\psi)= -(\tau+1)\mu_c \Delta \psi+ (\tau+1)\nu_c\Delta^2  \psi+ J(\psi_0,(2+\Delta)\psi) +J(\psi,\Delta \psi),\label{F}
 \label{LmL}
\bee
after omitting  the superscript prime.  Thus $(\tau,\psi)$ being a steady-state flow  means $(\tau,\psi)\in F^{-1}(0)$ and we are seeking steady-state solutions branching off the basic flow $(\tau,\psi)=(0,0)$.  It follows from Theorem \ref{main}(i) that
\bbe  \psi_c \in E_i\,\,\mbox{  for some integer }\,\, 1\le i \le 3. \label{my1} \bee
Let   $X=H^4_i$ and $Y=H_i$.  The injection property (\ref{In}) is valid due to   (\ref{inv}), and then the  definition given by  (\ref{F}) implies the validity of   properties (a)-(b).

Moreover, note that  $\nu_c>0$ and
\be F_\psi(0,0)\psi = -\mu_c \Delta \psi+ \nu_c\Delta^2  \psi+ J(\psi_0,(2+\Delta)\psi). \ee
The linear operator $F_{\psi}(0,0): X \mapsto Y$ is  Fredholm index zero. Hence the complement
\be Y/\Ran(F_\psi(0,0))= \mbox{\rm span}\{\psi_c^*\}\ee
for $\psi_c^*$ the conjugate counterpart of $\psi_c$, since
\be N(F_\psi (0,0)) =\mbox{\rm span}\{ \psi_c\}\ee
due to (\ref{one}) and (\ref{my1}). We thus have property (c).

 Additionally, upon the observation
\be F_{\tau\psi}(0,0)\psi = -\mu_c \Delta \psi+ \nu_c\Delta^2  \psi,\ee
equation  (\ref{two}) becomes   \bbe( F_{\tau\psi}(0,0)\psi_c, \psi_c^*)\ne 0 \,\mbox{ or }\,  F_{\tau\psi}(0,0)\psi_c\not \in \Ran(F_\phi(0,0)),
\bee
and hence property (d) holds true.
Here we have used the property
\be \Ran(F_\psi(0,0))= \Big\{\phi \in Y\Big| \, (\phi, \psi_c^*) =0\Big\}.\ee
Therefore, by Theorem \ref{tho}, we have the desired functions $\kappa$ and $\Psi$ so that the equation $F(\tau,\psi')=0$ has  solutions
\be \psi' = \epsilon \psi_c +\epsilon \Psi(\epsilon),\,\,\, \tau = \kappa(\epsilon)
\ee
This together with (\ref{pert}) implies Theorem \ref{main2}. The proof is completed.
\end{proof}

\section{Numerical computation}

{\sl Firstly}, we compute  critical  spectral solutions.
Let $\mathbf S$ be the set of all spectral solutions  $(\psi_c,\nu_c,\mu_c)$ with $\nu_c,\,\mu_c\ge 0$ and $(\nu_c,\mu_c)\ne (0,0)$ satisfying  the spectral problem
\bbe 0&=&-\mu_c  \Delta \psi_c+ \nu_c\Delta^2  \psi_c+ J(\psi_0,(2+\Delta)\psi_c),\label{LLl}
\\0&\ne &\psi_c= \sum_{n,m\ge 1} a_{n,m}\sin \frac {nx}2\sin \frac{my}2 \in H^4.\nonumber
\bee
By  Theorem \ref{th2},  the  set of all critical values $\{(\nu_c,\mu_c)\}$  is the union of the following three subsets
\bbe S_i =\Big\{ (\mu_c, \nu_c) \,\Big| \, (\psi_c,\nu_c,\mu_c)\in {\mathbf S}\,\mbox{ and }\,  \psi_c \in E_i \Big\},\,\,\, i=1,2,3.
\bee
Numerical spectral solutions are obtained by using the MATLAB eig function. $S_1$ is the same as the set of numerical data
 in \cite[Table I]{Thess} and is the  curve  joining  the points $(0.2371, 0)$ and $(0, 0.2310)$ in Figure \ref{ff3} (a), which shows that $S_1$ is  the neutral line separating  the linear stable and linear unstable domains.
 However, for the eigenfunction $\psi_c\in E_2$, the proof of Theorem \ref{th2}  shows the coexistence of two critical orthogonal eigenfunctions. The other one  is given by (\ref{hat}).
   The numerical simulation of the two eigenfunctions sharing  with the same critical vector value $(\nu_c,\mu_c)$  was given in \cite{Chen2019}.
  In fact, we have  $S_2=S_3$, since horizontal coordinate is symmetric with the vertical coordinate in the spectral problem (\ref{LLl}).
   The subset $S_2=S_3$ forms the line  touching the points $(0.0415, 0)$ and $(0, 0.01515)$ inside the linear unstable domain displayed in Figure \ref{ff3}(a). For displaying purpose, following three  classes of approximating critical eigenfunctions
 \bbe \psi_c \approx  \sum_{ n,m \mbox{ \footnotesize odd};\, 1\le n,m \le 11} a_{n,m} \sin \frac{nx}2 \sin\frac{mx}2 \in E_1, \,\,\, \label{eig1}\bee
\bbe \psi_c \approx  \sum_{ n \mbox{ \footnotesize odd}; \, m \mbox{ \footnotesize even};\, 1\le n,m \le 11} a_{n,m} \sin \frac{nx}2 \sin\frac{mx}2\in E_2, \,\,\, \label{eig2}\bee
\bbe \psi_c \approx  \sum_{ n \mbox{ \footnotesize even}; \, m \mbox{ \footnotesize odd};\, 1\le n,m \le 11} a_{n,m} \sin \frac{nx}2 \sin\frac{mx}2\in E_3, \,\,\,\label{eig3}\bee
 at some   typical critical values in $S_1\cup S_2\cup S_3$,  are exhibited respectively  in Figure \ref{ff3}(b)-(d).
\begin{figure}[h!]
 \centering
 \includegraphics[height=.35\textwidth, width=.35\textwidth]{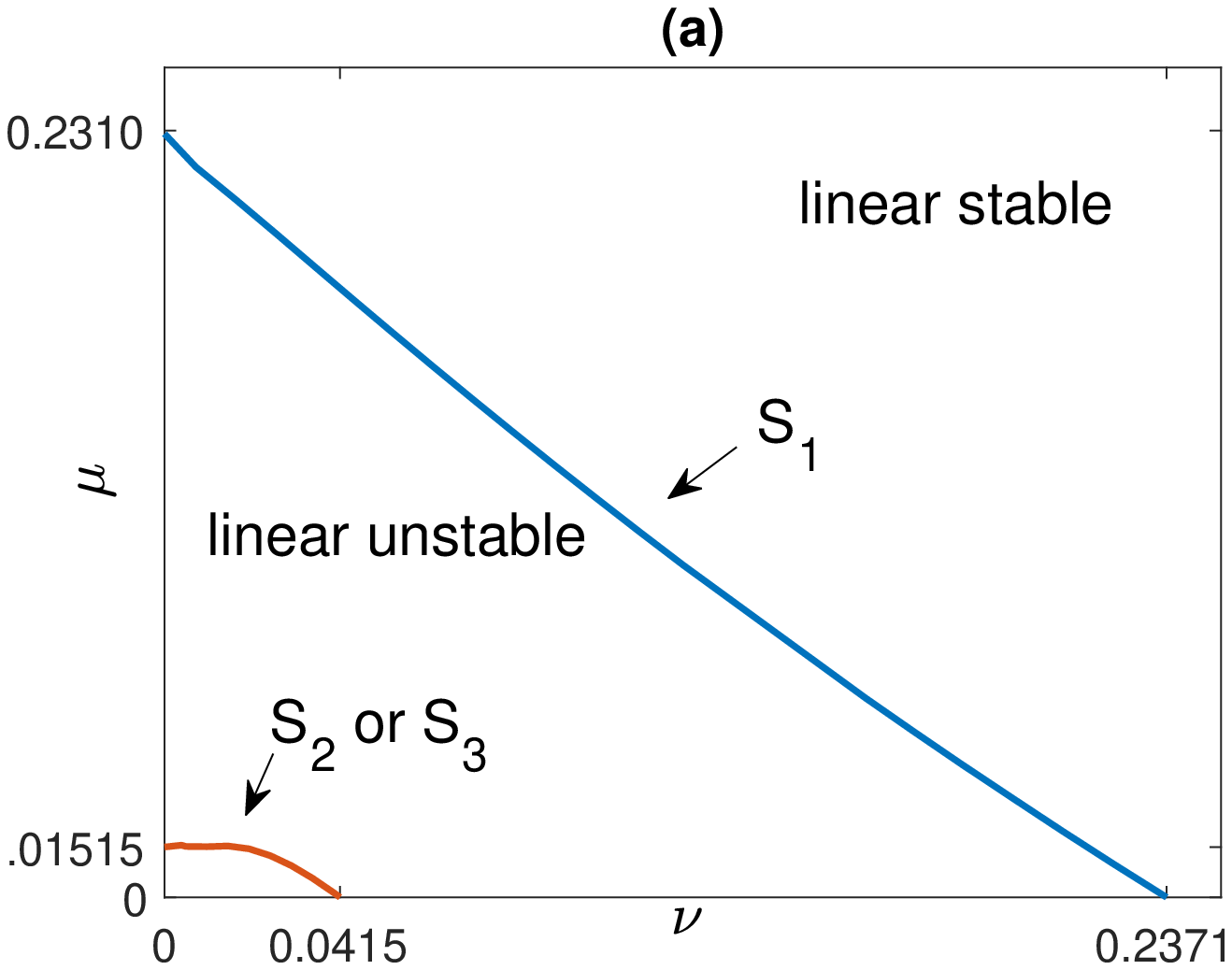}
\includegraphics[height=.35\textwidth, width=.35\textwidth]{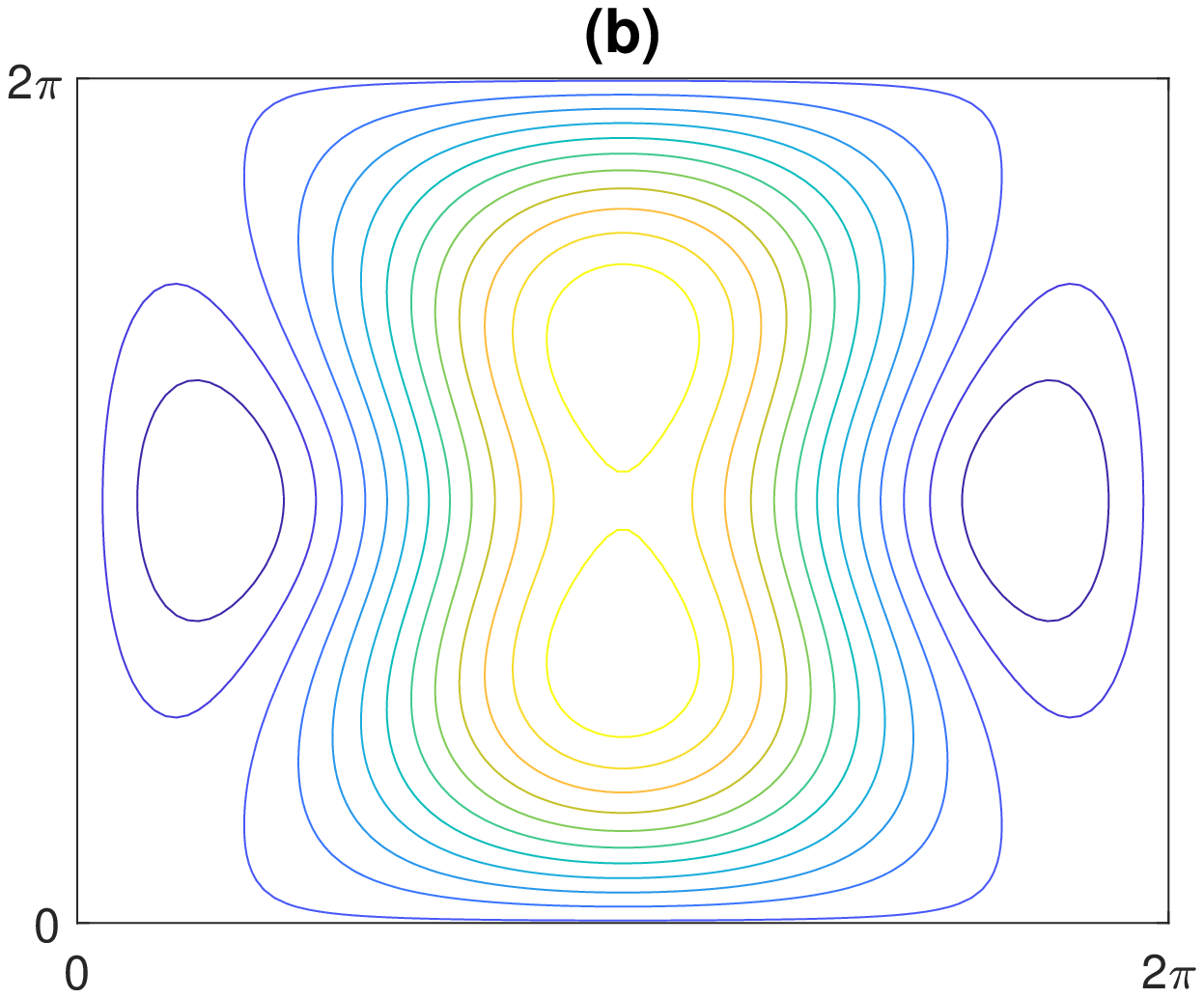}
\includegraphics[height=.35\textwidth, width=.35\textwidth]{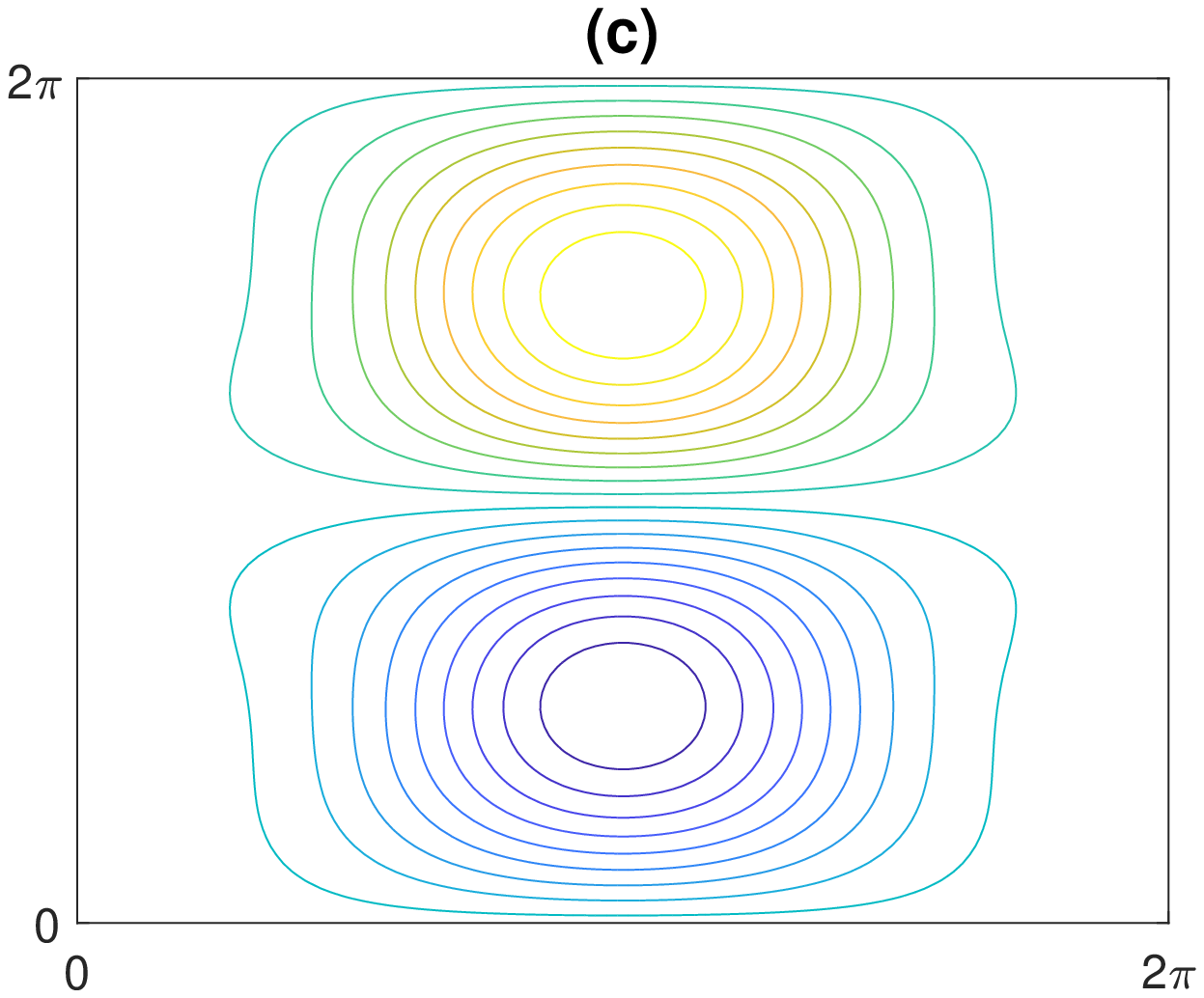}
\includegraphics[height=.35\textwidth, width=.35\textwidth]{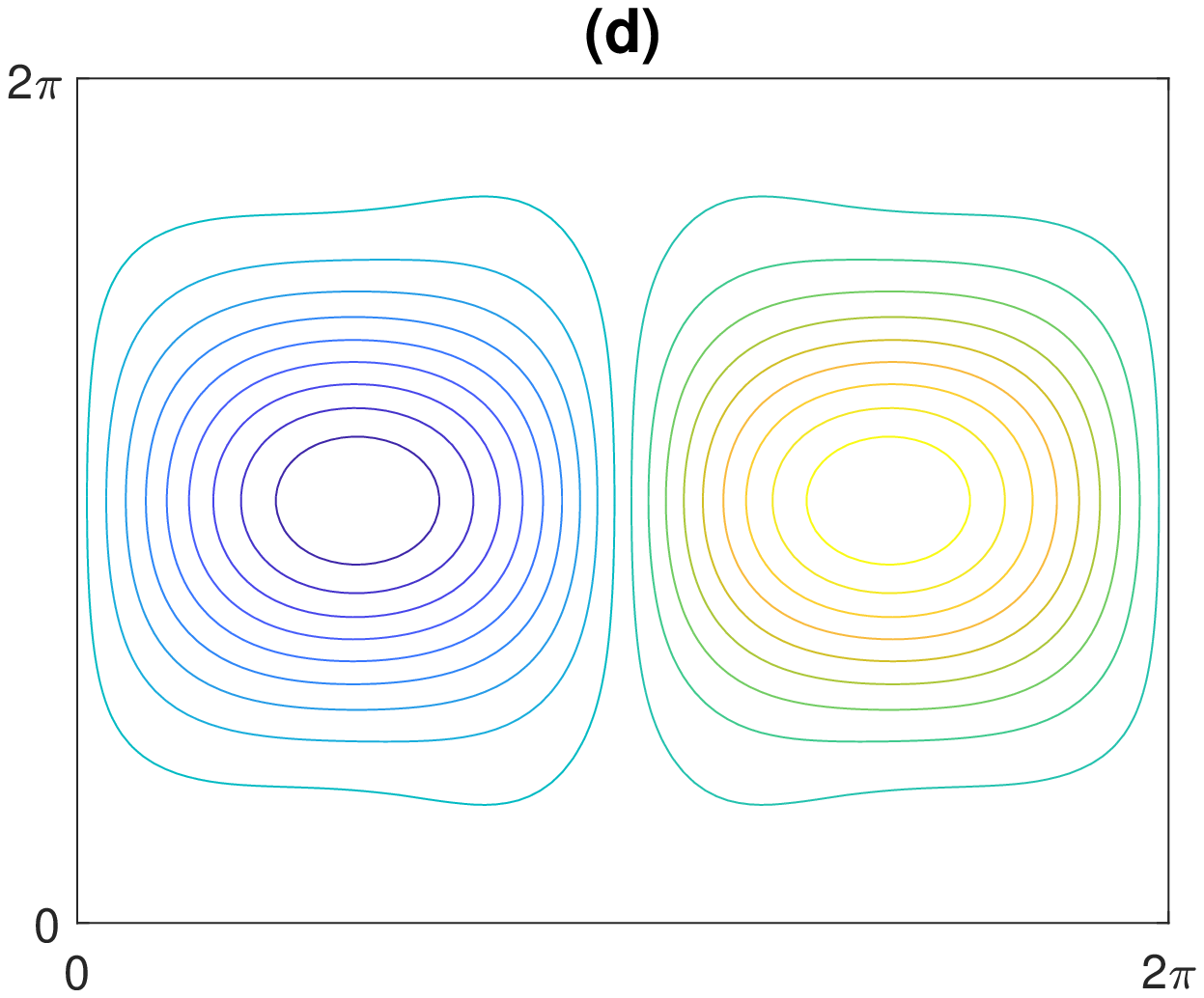}
\includegraphics[height=.35\textwidth, width=.35\textwidth]{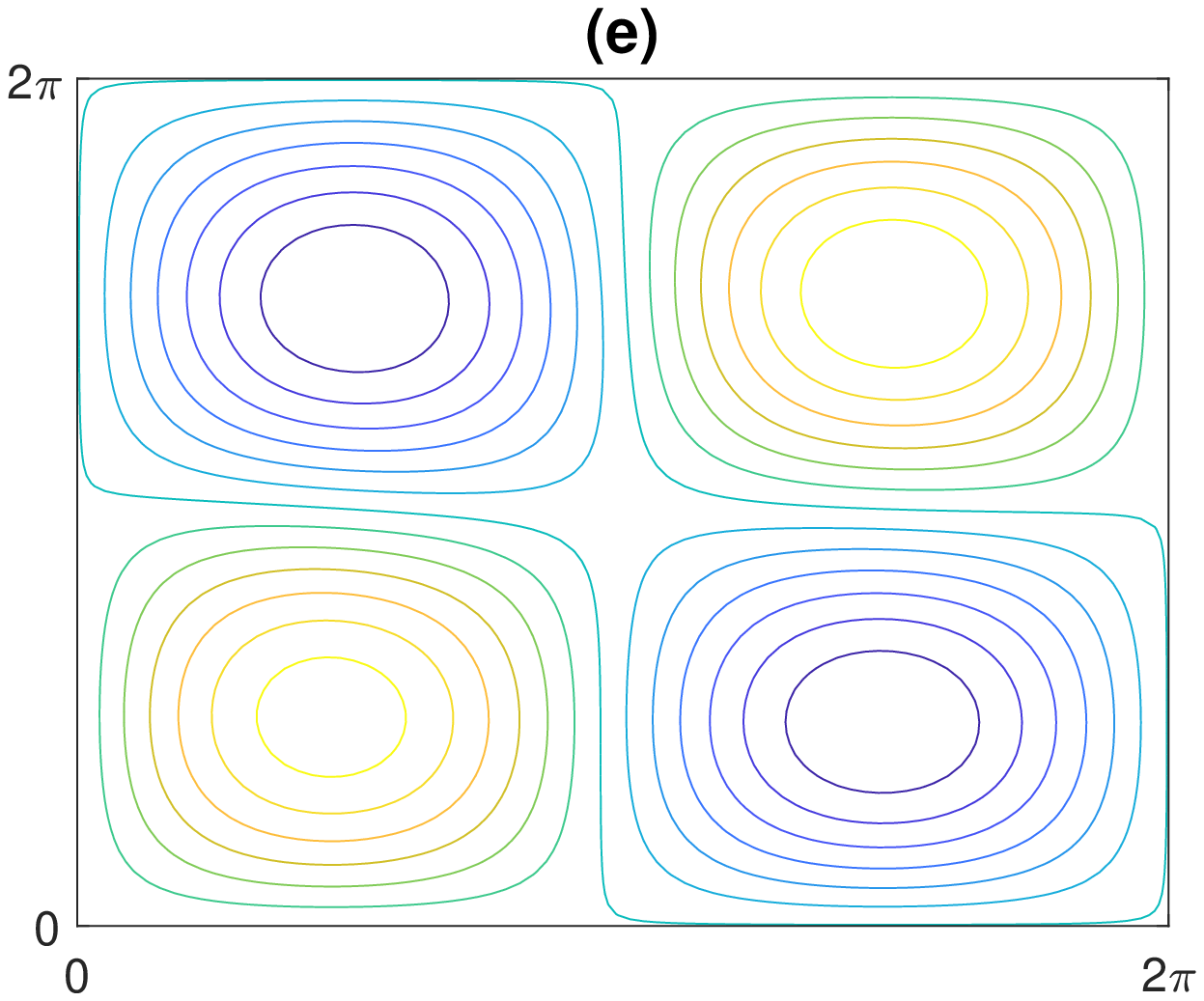}
 \caption{(a) All critical values $(\nu_c,\mu_c)\in S_1\cup S_2\cup S_3$;
 (b) critical eigenfunction $\psi_{c}\in E_1$ for  $(\nu_c,\mu_c)=(0.00054, 0.2315)$ or $(Re_c,Rh_c)=(22446, 1.326)$;
  (c) critical eigenfunction $\psi_{c}\in E_2$ for  $(\nu_c,\mu_c)=(0.00054,0.0177)$ or $(Re_c,Rh_c)=(6378, 4.929)$;
  (d) critical eigenfunction $\psi_{c}\in E_3$ for  $(\nu_c,\mu_c)=(0.00054,0.0177)$ or $(Re_c,Rh_c)=(6378, 4.929)$;
 (e)  numerical presentation of the nonlinear secondary flow at $(\nu,\mu)=(0.0005, 0.23)$.}

\label{ff3}
 \end{figure}
 \begin{figure}[h!]
 \centering
 \includegraphics[height=.35\textwidth, width=.35\textwidth]{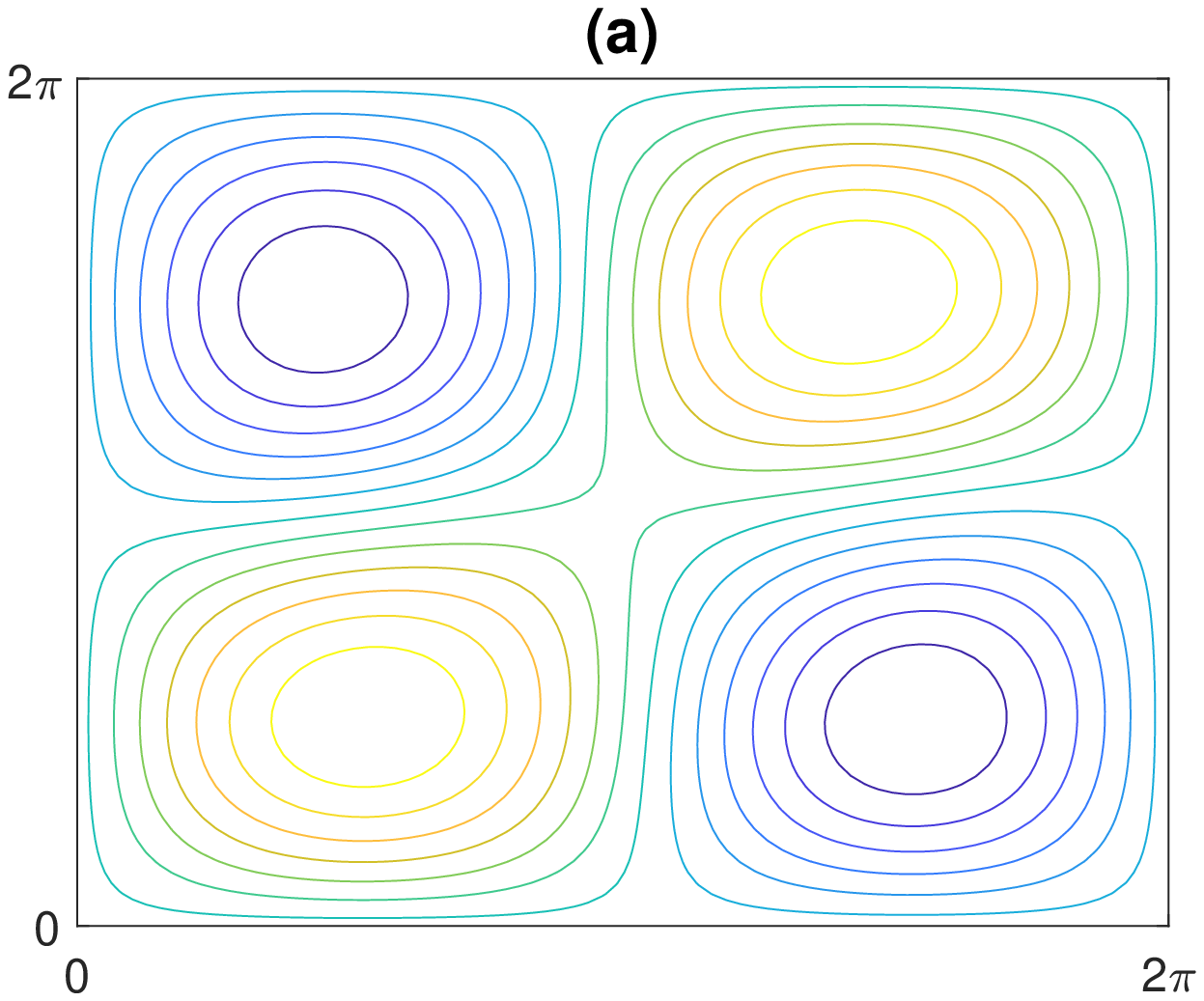}
\includegraphics[height=.35\textwidth, width=.35\textwidth]{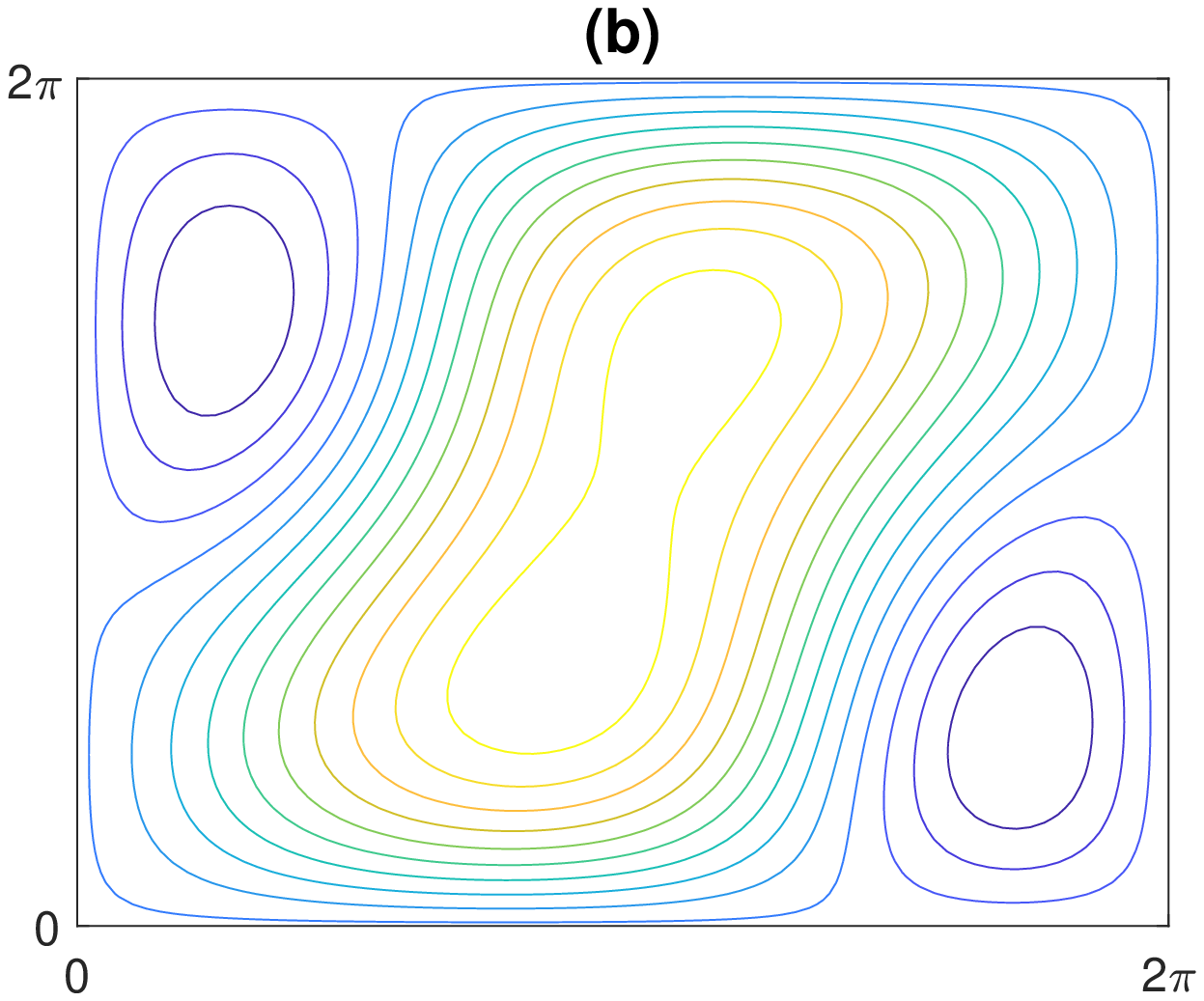}
\includegraphics[height=.35\textwidth, width=.35\textwidth]{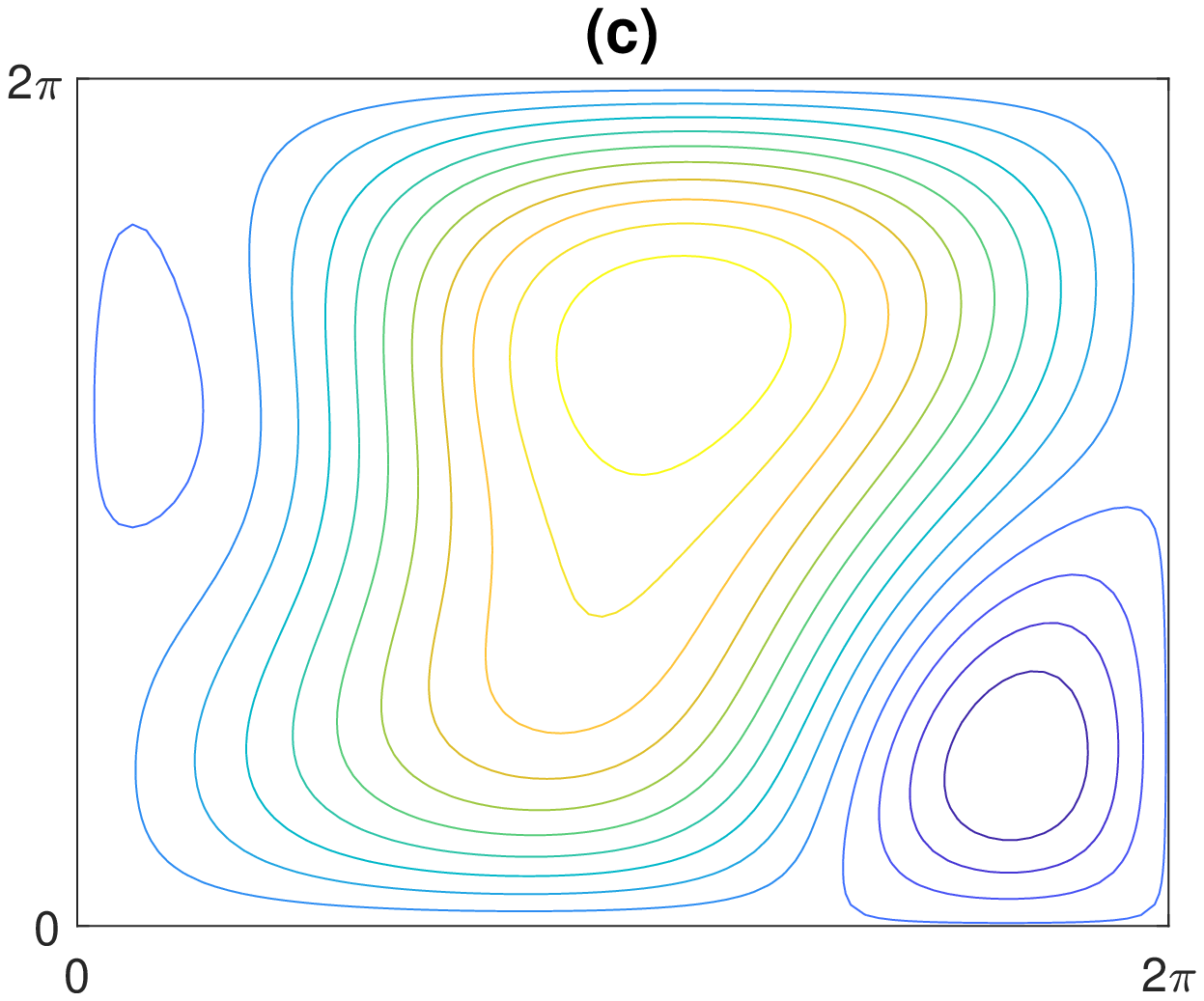}
\includegraphics[height=.35\textwidth, width=.35\textwidth]{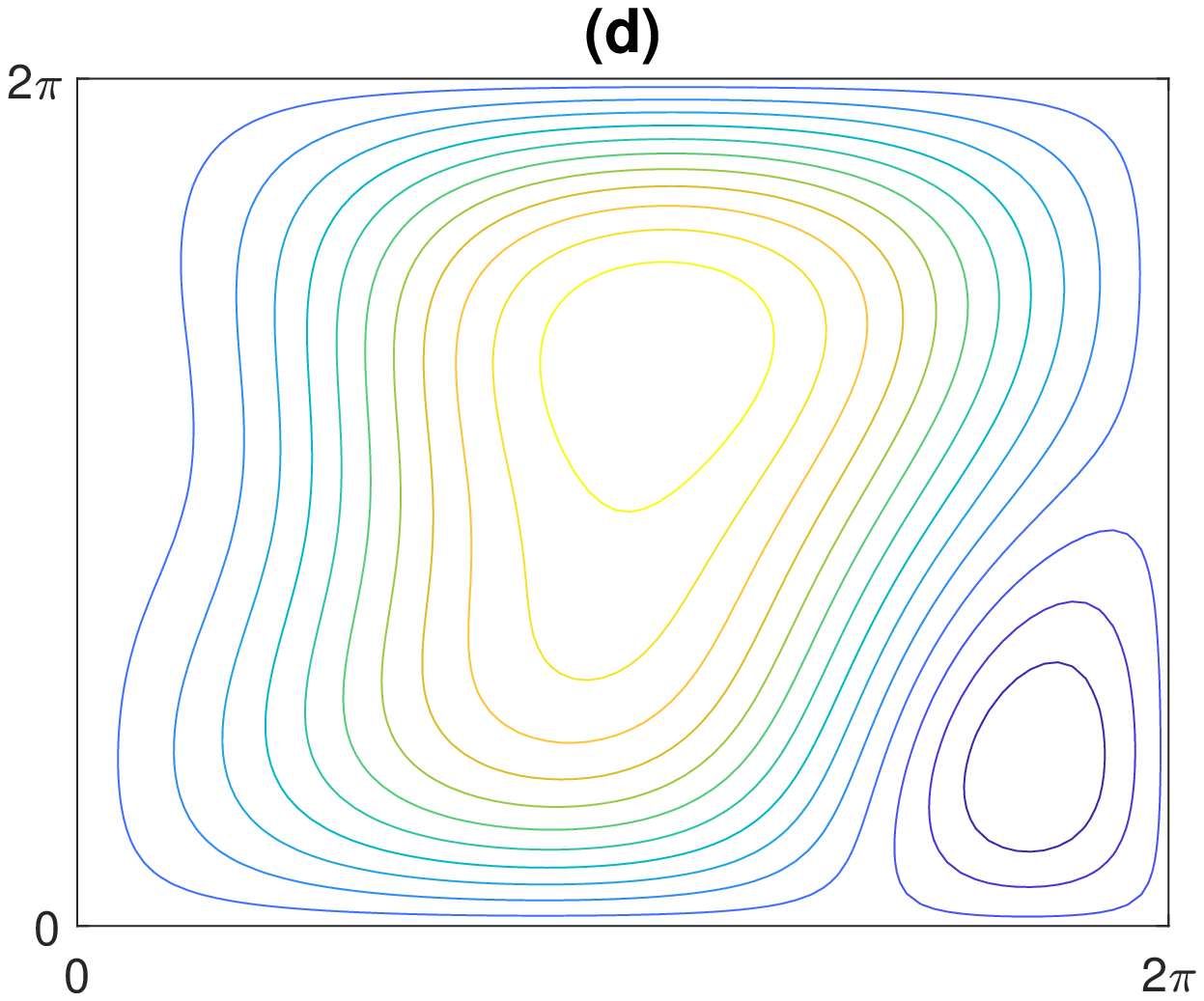}
 \caption{
 The  steady-state flows   when (a)   $(Re,Rh)=(700,1.55)$, (b)   $(Re,Rh)=(700,3)$, (c)   $(Re,Rh)=(700,5)$, and (d)   $(Re,Rh)=(700,7)$.
 }

\label{ff4}
 \end{figure}

{\sl Secondly}, we compute the bifurcating steady-state  flow shown in Theorem \ref{main}.
  To understand the experimental magnetohydrodynamic flows, we follow   \cite{Sommeria86,Sommeria,Sommeria87} to consider almost inviscid flows so that their energy dissipation is essentially controlled by the Hartmann layer friction $\mu$ or the Rayleigh number $Rh$. When the critical eigenfunction $\psi_c\in E_2\cup E_3$, the secondary flows branching from the corresponding critical vector values in the linear unstable domain   is unobservable in laboratory experiments, although they are contributed to the complexity of flow dynamic behaviour towards to turbulence.  We only consider the flows related to $(\psi_c,\nu_c,\mu_c)$ with $\psi_c\in E_1$.

If a secondary flow bifurcating in the direction $\psi_c\in E_1$ is  stable,  it attracts flows initially from the  states in its  vicinity and thus can be reached   by numerical computation.
Indeed, the desired  stable secondary flow is obtained by  employing  a finite difference scheme with a $80\times 80$ gridding mesh of the fluid domain $\Omega$.  For example, a numerical secondary solution is  obtained for $(\nu,\mu)$ close to the critical condition  $(\nu_c,\mu_c)=(0.00054,0.2306)$ or $(Re_c,Rh_c)=(22402, 1.329)$. In Figure \ref{ff3}(e), we present nonlinear secondary steady-state flow  at $(\nu,\mu)=(0.0005, 0.23)$ or $(Re,Rh)=(24158, 1.33)$, which represents the secondary  flow bifurcating from $\psi_0$  at $(\nu_c,\mu_c)=(0.00054,0.2306)$. This secondary flow is actually the limit of  the flow initially from the state (see figure \ref{ff1} (b))
\bbe  \psi_0 - 0.1 \sin \frac x2\sin \frac y2. \label{init}\bee
Therefore, the steady-state flow is obtained by computing the non-stationary flow in the  numerical computation.


 The secondary flow in Figure \ref{ff3}(e) shows the topological transition for the merging of two vortices, observed by Sommaria and Verron \cite{Sommeria, Sommeria87}. Their experimental  threshold for the onset of secondary flow  is $Rh_c=1.52$, which is close to  but higher than the present numeric one $Rh_c=1.329$. This is due to the neglect of the energy dissipation inside the lateral boundary layers of the original three-dimensional fluid motion problem (see \cite{Thess}).

 When $(\nu,\mu)$ is  close to the threshold $(\nu_c,\mu_c)$, the nonlinear secondary flow in Figure \ref{ff3}(e) is comparable  with the initial  form (\ref{init})
 expressed in Figure \ref{ff1} (b). This is owing to the principal mode $\sin \frac x2\sin \frac y2$  generating  the eigenfunction $\psi_c$. By numerical computation and (\ref{aaa2}), the principal  coefficient $a_{1,1}$ of the principal mode is significantly larger than other coefficients $a_{n,m}$.

{\sl Finally}, we  show  an additional nonlinear   topological bifurcation  of the flow motion by   considering  another  bifurcating flow  with respect to the initial state
\bbe  \psi_0 + 0.1 \sin \frac x2\sin \frac y2. \label{ini}\bee
By choosing a  smaller Reynolds number value, the initial state leads to the occurrence of the four steady-state flows  shown respectively in Figure \ref{ff4} (a)-(d) in four different $(Re,Rh)$ values. Their corresponding $(\nu,\mu)$ values are $(0.0173, 0.1982)$, $(0.0102,0.0603)$, $(0.0072,0.0254)$ and $(0.0058, 0.0148)$, respectively. Figure \ref{ff4} shows the inverse energy cascade towards large scales by merging four vortices into three and then combining three vortices into two.


\end{document}